\definecolor{darkblue}{rgb}{0,0,0.8}
\newtheorem{theorem}{Theorem}[section]
\newtheorem{lemma}[theorem]{Lemma}
\newtheorem{corollary}[theorem]{Corollary}
\newtheorem{proposition}[theorem]{Proposition}
\theoremstyle{definition}
\newtheorem{problem}[theorem]{Problem}
\DeclareMathOperator{\PSL}{{\mathrm{PSL}}}
\DeclareMathOperator{\Aut}{{\mathrm{Aut}}}
\DeclareMathOperator{\Sym}{{\mathrm{Sym}}}
\DeclareMathOperator{\Inn}{{\mathrm{Inn}}}
\newcommand\cc[1]{\textnormal{\texttt{#1}}}
\renewcommand{\geq}{\geqslant}
\renewcommand{\le}{\leqslant}
\renewcommand{\ge}{\geqslant}
\newcommand\sd{\mkern3mu{:}\mkern3mu} 
\title{Synchronising primitive groups of diagonal type exist}
\author[Bamberg]{John Bamberg}
\author[Giudici]{Michael Giudici}
\author[Lansdown]{Jesse Lansdown}
\author[Royle]{Gordon F. Royle}
\address{Centre for the Mathematics of Symmetry and Computation, Department of Mathematics and Statistics, The University of Western Australia, Crawley, WA 6009, Australia.}
\email{firstname.lastname@uwa.edu.au}
\dedicatory{Dedicated to the memory of Peter M. Neumann.}
\begin{document}

\maketitle

\begin{abstract}
Every synchronising permutation group is primitive and of one of three types:
\emph{affine}, \emph{almost simple}, or \emph{diagonal}.
We exhibit the first known example of a synchronising diagonal type group. More precisely, we show that $\PSL(2,q)\times \PSL(2,q)$
acting in its diagonal action on $\PSL(2,q)$ is separating, and hence synchronising, for $q=13$ and $q=17$.
Furthermore, we show that such groups are non-spreading for all prime powers $q$.
\end{abstract}

\section{Introduction}

A finite permutation group $G$ acting on a finite set $\Omega$ is \emph{synchronising} if
the automaton whose transitions are generators of $G$ together with an arbitrary non-permutation is synchronising. This means that there is a word in the automaton's alphabet (called a \emph{reset} word) such that, after reading this word, the automaton is in a known fixed state, regardless of its starting state.
Every synchronising permutation group is
primitive \cite[Theorem 3.2]{survey}, and moreover, it has been shown that they are 
of \emph{affine}, \emph{almost simple}, or \emph{diagonal} type \cite[Proposition 3.3]{survey}.
There are many examples known of synchronising groups of affine or almost simple type, but the existence of synchronising groups of diagonal type has been open ever since the birth of this subject (see \cite{Bray}). Bray et. al. \cite{Bray} showed that the structure of a putative synchronising group of diagonal type is constrained, in that its socle contains just two factors. 
In addition, they showed that if $G$ is primitive of diagonal type, then 
$G$ is synchronising if and only if it is \emph{separating}.   
So for primitive permutation groups of diagonal type, we have the following hierarchy:
\[
\text{spreading } \implies \text{ separating } \iff \text{ synchronising}.
\]
See Section \ref{sect:background} for relevant background theory about the synchronisation hierarchy of primitive permutation groups.

In this paper, we investigate the group $\PSL(2,q)\times \PSL(2,q)$ in its diagonal action on $\PSL(2,q)$ and find the first known examples of synchronising groups of diagonal type.

\begin{theorem}\label{main1}
The group $\PSL(2,q)\times \PSL(2,q)$ in its diagonal action on $\PSL(2,q)$ is separating, and hence synchronising, for $q=13$ and $q=17$.
\end{theorem}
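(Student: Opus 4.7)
My approach is via the combinatorial criterion for separation. Let $T=\PSL(2,q)$ and let $G=T\times T$ act on $T$ by the diagonal action $(a,b)\cdot x=a^{-1}xb$, so that the point stabiliser of $1\in T$ is the diagonal subgroup. A standard reformulation says that $G$ is separating if and only if every non-trivial $G$-invariant graph $\Gamma$ on $T$ satisfies $\omega(\Gamma)\alpha(\Gamma)<|T|$, where $\omega$ and $\alpha$ denote the clique and independence numbers. The first step is therefore to identify all $G$-invariant graphs explicitly. A direct calculation shows that the $G$-orbits on pairs $(x,y)\in T\times T$ are indexed by the $T$-conjugacy class of $x^{-1}y$, so the $G$-invariant graphs are exactly the normal Cayley graphs $\Cay(T,S)$ whose connection set $S$ is an inverse-closed union of non-identity conjugacy classes of $T$.

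For $q$ odd, $\PSL(2,q)$ has $(q+5)/2$ conjugacy classes, giving nine for $q=13$ and eleven for $q=17$. After grouping into inverse-closed unions and discarding the empty and complete cases, only a modest list of Cayley graphs remains to be examined, namely at most $2^{k}-2$ graphs where $k$ is the number of $\{C,C^{-1}\}$-pairs of non-identity classes. For each such graph I would compute, or at least usefully bound, $\omega$ and $\alpha$. The eigenvalues of $\Cay(T,S)$ are determined by the character table of $T$ via the Frobenius-Schur formula $\lambda_\chi=\chi(1)^{-1}\sum_{s\in S}\chi(s)$ over the irreducible characters $\chi$, and from these the Hoffman ratio bound provides a cheap upper bound on $\alpha(\Gamma)$, with a dual bound for $\omega(\Gamma)$. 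In the majority of cases this spectral bound should be sharp enough to give $\omega\alpha<|T|$ outright.

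The main obstacle lies in the borderline graphs for which the spectral bound is not tight: here one is forced to compute $\omega$ or $\alpha$ directly on a vertex-transitive graph with $|T|=1092$ or $2448$ vertices. Such computations are in principle NP-hard, but the rich symmetry coming from $T\times T$ permits aggressive pruning: one may restrict attention to cliques and independent sets containing a fixed base point and branch only on orbit representatives of its stabiliser under the diagonal $T$, and further exploit field and graph automorphisms. Crucially, the analogous inequality \emph{fails} at small $q$---which is precisely why no synchronising diagonal group has previously been exhibited---so the proof cannot rely on generic structural bounds and will necessarily depend on a careful case-by-case computer verification at $q=13$ and $q=17$. Once $\omega(\Gamma)\alpha(\Gamma)<|T|$ has been certified for every non-trivial $G$-invariant Cayley graph $\Gamma$ in each of these two cases, separation follows, and hence so does synchronisation via the equivalence recorded in the introduction.
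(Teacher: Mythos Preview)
Your proposal is correct and follows the same overall strategy as the paper: reduce separation to verifying $\omega(\Gamma)\,\alpha(\Gamma)<|T|$ for every nontrivial $G$-invariant graph, identify these as the Cayley graphs $\Cay(T,S)$ with $S$ a union of nontrivial conjugacy classes, use spectral information to dispose of most cases, and finish the stubborn ones by computer.

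The difference lies in the reduction step. You plan to use the Hoffman ratio bound (and its clique analogue) derived from the character-theoretic eigenvalues. The paper instead works inside the group association scheme $\mathcal{A}$ for $T$ and exploits the Delsarte clique--coclique inequality: in such a scheme one always has $\alpha\cdot\omega\le |T|$, and equality forces the maximum clique and coclique to be \emph{design-orthogonal}. The paper then observes that the rational idempotents of $\mathcal{A}$ coincide with those of a much smaller fused scheme $\mathcal{B}$, so design-orthogonality can be tested in $\mathcal{B}$; a MacWilliams-transform case analysis on the possible inner distributions then cuts the $127$ complementary pairs for $q=13$ down to six graphs before any clique search is run, and similarly trims $q=17$. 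Your Hoffman-bound route is more elementary and would still succeed in principle, but it is strictly weaker than the design-orthogonality argument and would leave considerably more graphs (on $1092$ and $2448$ vertices) to be settled by direct clique/coclique computation. The association-scheme machinery is precisely what buys the paper a manageable computer search.

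One small remark: for the two values of $q$ in question every conjugacy class of $\PSL(2,q)$ is already inverse-closed, so your ``$\{C,C^{-1}\}$-pair'' bookkeeping is unnecessary here.
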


We were led to this family of groups because they form a natural class of diagonal type groups whose socle has only two factors. Neumann \cite[Example 3.3]{Neumann}, see also \cite[\S6.4]{survey}, observed that if a nonabelian simple group $T$  admits an \emph{exact factorisation} into two nontrivial proper subgroups $A$ and $B$ (that is, $T=AB$ with $A\cap B=\{1\}$), then $T\times T$ in its diagonal action on $T$ is non-synchronising (and therefore non-separating and non-spreading). It\^{o} \cite{ito} has shown that $\PSL(2,q)$ has an exact factorisation unless $q\equiv 1\pmod 4$ and $q \notin \{5,29\}$. As a result, the first few values of $q$ for which $\PSL(2,q) \times \PSL(2,q)$ might be synchronising are $q=9$, $q=13$, $q=17$.

The group $\PSL(2,9)$ does not have an exact factorisation into subgroups, but we use a slightly more general concept to show that $\PSL(2,9) \times \PSL(2,9)$ is non-synchronising, thereby resolving its position in the synchronisation hierarchy (Proposition \ref{A6}). Then for $q=13$ and $q=17$, we use a combination of theory and computation to show that $\PSL(2,q) \times \PSL(2,q)$ acting in its diagonal action on $\PSL(2,q)$ is synchronising. Finally, we show that none of the groups in this family are spreading.
\begin{theorem}\label{main2}
The group $\PSL(2,q)\times \PSL(2,q)$ acting in its diagonal action on $\PSL(2,q)$, for $q$ a prime power, is non-spreading.
\end{theorem}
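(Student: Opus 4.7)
The plan is to exhibit, for every prime power $q$, an explicit witness to non-spreading: a subset $A \subseteq T := \PSL(2, q)$ with $|A| \geq 2$ and a multiset $M$ on $T$ (i.e.\ a function $w : T \to \mathbb{Z}_{\geq 0}$) with $|M| := \sum_t w(t) \geq 2$ and $|A|\cdot|M| = |T|$, such that $|A^g \cap M| = 1$, counted with multiplicity, for every $g$ in the diagonal group $T \times T$.

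The natural choice is $A = B$, a Borel subgroup stabilising a point $\infty \in \mathrm{PG}(1, q)$, so that $[T : B] = q+1$ and hence $|M| = q+1$. A direct computation shows that
\[
xBy^{-1} = \{t \in T : t(y\cdot\infty) = x\cdot\infty\} \qquad \text{for all } x, y \in T,
\]
so the non-spreading condition becomes $\sum_{t : t(p) = p'} w(t) = 1$ for all $p, p' \in \mathrm{PG}(1, q)$; equivalently, the matrix equation $\sum_t w(t)\, P_t = J$ holds, where $P_t$ is the permutation matrix of $t$ on $\mathrm{PG}(1, q)$ and $J$ is the $(q+1) \times (q+1)$ all-ones matrix. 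When $q$ is even, the non-split cyclic torus of order $q+1$ acts regularly on $\mathrm{PG}(1, q)$, and when $q \equiv 3 \pmod 4$, the dihedral subgroup $D_{(q+1)/2}$ of order $q+1$ acts regularly (its involutions being fixed-point-free, since $-1$ is then a non-square in $\mathbb{F}_q$). In either case, taking $M$ equal to this regular subgroup gives a set-theoretic witness, $(B, M)$ is in fact an exact factorisation of $T$, and $T \times T$ is already non-synchronising by Neumann's observation from the introduction.

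The essential case is $q \equiv 1 \pmod 4$: here involutions of $T$ fix two points of $\mathrm{PG}(1, q)$, so $D_{(q+1)/2}$ has two orbits of size $(q+1)/2$ and no subgroup of order $q+1$ acts regularly, forcing the use of a genuine multiset. I would then restrict $w$ to class functions, $w = \sum_C c_C \chi_C$: the matrix $\sum_t w(t) P_t$ is $T$-equivariant and hence lies in the two-dimensional commutant $\mathrm{End}_T(\mathbb{C}[\mathrm{PG}(1, q)]) = \mathbb{C} I + \mathbb{C} J$, so the equation collapses to two scalar linear constraints on the coefficients $c_C$ in terms of the class sizes $|C|$ and the fixed-point numbers $\pi(c_0)$ of class representatives on $\mathrm{PG}(1, q)$. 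Explicit non-negative integer solutions may be built by combining the identity class with appropriately chosen semisimple and unipotent classes. The main obstacle is securing \emph{integrality} uniformly in $q$: the rational solution $w \equiv 1/|B|$ is immediate, but producing $w \geq 0$ integer with $\sum_t w(t) = q+1$ requires careful exploitation of the divisibility structure of the class sizes in $\PSL(2, q)$.
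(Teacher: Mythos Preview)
Your setup for the easy cases ($q$ even, $q\equiv 3\pmod 4$) matches the paper exactly. The gap is in the essential case $q\equiv 1\pmod 4$, and it is not merely that the integrality argument is unfinished: the approach as stated cannot succeed.

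You fix $A=B$ (a Borel subgroup) and demand a multiset $M$ with non-negative integer weights $w$ satisfying $\sum_{t:\,t(p)=p'} w(t)=1$ for every pair $p,p'\in\mathrm{PG}(1,q)$. But if some $t$ had $w(t)\ge 2$, then already the pair $(p,t(p))$ would give a sum $\ge 2$. Hence $w$ is $\{0,1\}$-valued and $M$ is a genuine \emph{set}, indeed a sharply transitive set for $T$ on $\mathrm{PG}(1,q)$. By the discussion in \S\ref{sec:diagonal}, the existence of such a set forces $T\times T$ to be non-synchronising. Theorem~\ref{main1} shows this is false for $q=13$ and $q=17$ (and Lemma~\ref{gamma7} establishes the non-existence of a sharply transitive set on $14$ points directly for $q=13$). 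So for precisely the values of $q$ that motivate the theorem, the witness you are looking for does not exist. The ``integrality obstacle'' you flag is not a technicality to be overcome; it is a genuine obstruction.

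The paper's remedy is to drop the requirement $\lambda=1$. Keeping $B=T_1$ the Borel, it takes $\lambda=|T_1|$ and builds the multiset $A$ with multiplicity $2$ on the index-$2$ subgroup $T_1^2\cong[q]{:}C_{(q-1)/4}$ of $T_1$, multiplicity $0$ on $T_1\setminus T_1^2$, and multiplicity $1$ on $T\setminus T_1$. Then $|A|=|T|$, and a short double-counting (Lemma~\ref{nicepropPSL}: $|T_1^2\cap T_2t|=\tfrac12|T_1\cap T_2t|$ for any second point-stabiliser $T_2$ and any $t\in T$) gives $|\chi_A\circ\chi_{T_1^g}|=|T_1|$ for all $g\in G$. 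The key move you are missing is exactly this relaxation of $\lambda$; once you allow $\lambda>1$, a completely explicit and uniform construction becomes available without any case analysis on class sizes.
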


This leaves the following open problem to completely resolve the synchronisation hierarchy for groups of this type.

\begin{problem}
For which prime powers $q$ such that $q \equiv 1 \pmod{4}$, $q \geq 25$ and $q \ne 29$ is the group $\PSL(2,q) \times \PSL(2,q)$ acting in its diagonal action on $\PSL(2,q)$ synchronising?
\end{problem}

The smallest $q$ for which this is unresolved is $q=25$. Any argument for the general case will not be completely straightforward because the group is synchronising for $q=13$ and $q=17$, but not synchronising for $q=29$.

To demonstrate that a specific group $G$ is separating or synchronising involves (at least in principle) determining the clique number, coclique number and/or chromatic number for every single $G$-invariant graph. These parameters are notoriously hard to compute for even a single graph, and there can be a large number of $G$-invariant graphs that must be considered. For this reason, we strive to eliminate as many graphs as possible by  theoretical means when proving Theorem \ref{main1}. The technique that we introduce results in a significant reduction in the number of graphs that must be considered by computer.

\section{Background}\label{sect:background}

\subsection{Synchronising, separating, spreading}

A transformation semigroup on a set $\Omega$ 
is called \emph{synchronising} if it contains a constant map (that is, a map $f$ and $\beta\in \Omega$ such that $\alpha^f=\beta$ for all $\alpha\in\Omega$).
A finite permutation group $G$ acting on a set $\Omega$ is \emph{synchronising} if
for any non-bijective transformation $f$, the transformation semigroup $\langle G,f\rangle$ is synchronising. Alternatively, each of the following conditions is equivalent to $G$ being non-synchronising:
\begin{enumerate}
\item There is a nontrivial $G$-invariant graph $\Gamma$
with clique number $\omega(\Gamma)$ equal to its chromatic number (see \cite[Corollary 4.5]{survey}). 
\item There is a nontrivial partition $\mathcal{P}$ of $\Omega$ with transversal $B$ such that $B$ is a transversal of $\mathcal{P}^g$ for all $g\in G$ (see \cite[Theorem 3.8]{survey}).
\end{enumerate}

We need some notation for both sets and multisets that will be used throughout the paper. Suppose that $A$ is a set or multiset whose elements belong to $\Omega$. Then the \emph{characteristic vector} $\chi_A$ of $A$ is the vector indexed by $\Omega$ where $(\chi_A)_\alpha$ is the multiplicity of $\alpha$ in $A$. The cardinality of $A$ is denoted either $|A|$ or $|\chi_A|$.  

A transitive permutation group $G$ acting on a set $\Omega$ is \emph{non-spreading} if there
exists a nontrivial (that is, nonconstant and more than one element with nonzero multiplicity) multiset $A$, a nontrivial set $B$, and a positive integer $\lambda$, such that 
\begin{enumerate}
\item $|A|$ divides $|\Omega|$,
\item $|\chi_A \circ \chi_{B^g}|=\lambda$ for all $g\in G$, where $\circ$ is the Schur product of two vectors.
\end{enumerate}
If in addition, $\lambda = 1$, then $A$ must be a set such that $|A||B|=|\Omega|$, in which case $G$ is said to be \emph{non-separating}
\cite[\S5.5]{survey}.
We can also use invariant graphs
to characterise non-separating group actions.
A transitive permutation group $G$ acting on a set $\Omega$ is non-separating if and only if
there is a nontrivial $G$-invariant graph $\Gamma$ having $\alpha(\Gamma)\cdot \omega(\Gamma) = |V\Gamma|$ (see \cite[Theorem 4.5]{survey}),
where $\alpha(\Gamma)$ and $\omega(\Gamma)$ are the size of the largest coclique and clique of $\Gamma$, respectively. 

Each of the properties synchronising, separating, and spreading are propagated to overgroups. That is, if $G\le H\le \Sym(\Omega)$
and $G$ is synchronising (resp. separating, resp. spreading) then $H$ is also synchronising (resp. separating, resp. spreading).

\subsection{Association schemes}

Let $\Omega$ be a set, and let $A_0, A_1, \ldots, A_d$ be symmetric  $\{0,1\}$-matrices with rows and columns indexed by $\Omega$. Then $\mathcal{A}=(\Omega, \{A_0, A_1, \ldots, A_d\})$ is an \emph{association scheme} if the following conditions hold:
\begin{enumerate}
    \item $A_0$ is the identity matrix,
    \item $\sum_{i=0}^d A_i$ is the matrix with every entry equal to $1$,
    \item There exist constants $p_{ij}^k$ depending only on $i$, $j$, and $k$, such that 
    \[
    A_i A_j = \sum_{k=0}^d p_{ij}^k A_k.
    \]
\end{enumerate}
The matrices $A_0$, $A_1$, $\ldots$, $A_d$ are the \emph{adjacency matrices} of $\mathcal{A}$. Indeed each $A_i$ is the adjacency matrix
of an undirected graph. We shall refer to these as the \emph{graphs in the scheme~$\mathcal{A}$}. A graph is the \emph{union of graphs in the scheme~$\mathcal{A}$} if its adjacency matrix is the sum of adjacency matrices of $\mathcal{A}$.
It is well known that $\mathbb{R}^\Omega$ decomposes into $d+1$ simultaneous eigenspaces for the adjacency matrices of $\mathcal{A}$. Moreover there are projection matrices $E_0, E_1, \ldots, E_d$ onto each of these eigenspaces, such that 
\[
E_i = \sum_{j=0}^d Q_{ji} A_j,
\]
where $Q$ is called the \emph{matrix of dual eigenvalues}.
If $C$ is a subset of $\Omega$, then its \emph{inner distribution} is the vector $a = (a_0,a_1,\ldots,a_d)$ defined by
\[
a_i= \frac{1}{|C|}\chi_C A_i \chi_C^\top.
\]
If $Q$ is the matrix of dual eigenvalues of $\mathcal{A}$, then
\[
(a Q)_j = \frac{|\Omega|}{|C|} \chi_C E_j \chi_C^\top
\]
for all $j\ge 0$. The vector $aQ$ is sometimes known as the \emph{MacWilliams transform} of $C$.

The \emph{dual degree set} of $C$ is the set of nonzero indices $j$
for which the $j$-th coordinate of its MacWilliams transform is nonzero.
Two subsets of $\Omega$ are \emph{design-orthogonal} if
their dual degree sets are disjoint.
We refer the reader to the classic text by Bannai and Ito \cite{BannaiIto1984} for more information on this subject.

\begin{theorem}[\cite{delsarte}, Theorem 3.9 (and discussion thereafter); see also \cite{gm}] \label{thm:CliquesInSchemes}
Let $\mathcal{A}$ be an association scheme on $\Omega$ and let $\Gamma$ be a union of graphs of the scheme.
If $C$ is a clique and $S$ is a coclique in $\Gamma$,
then $|C|\cdot|S|\le |\Omega|$. Equality holds if and only if $C$ and $S$ are design-orthogonal.
\end{theorem}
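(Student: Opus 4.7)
The plan is to prove $|C|\cdot|S|\leq|\Omega|$ by combining the combinatorial structure of a clique versus a coclique with a Plancherel-type identity relating the inner distributions of $C$ and $S$ to their MacWilliams transforms. The inequality drops out by comparing the two sides of the identity, and the equality case reads off directly from which summands are forced to vanish.

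First I would extract the combinatorial content of the hypotheses. Let $I\subseteq\{1,\ldots,d\}$ index the relations whose sum is the adjacency matrix of $\Gamma$, and write $c=(c_i)$, $s=(s_i)$ for the inner distributions of $C$ and $S$. Every pair of distinct elements of the clique $C$ lies in some $A_i$ with $i\in I$, so $c_i=0$ for $i\notin\{0\}\cup I$; dually, $s_i=0$ for $i\in I$. Combined with $c_0=s_0=1$, the pointwise product satisfies $c_i s_i = \delta_{i,0}$, and hence
\[
\sum_{i=0}^d \frac{c_i s_i}{k_i} \;=\; 1,
\]
where $k_i$ is the valency of $A_i$ and $k_0=1$.

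The main technical step is the Plancherel-type identity
\[
\sum_{j=0}^d \frac{(cQ)_j (sQ)_j}{m_j} \;=\; |\Omega|\sum_{i=0}^d \frac{c_i s_i}{k_i},
\]
where $m_j$ is the multiplicity of the $j$th common eigenspace. This follows from the standard orthogonality relation for the matrix of dual eigenvalues (equivalently, from $QP=|\Omega|I$ together with the quasi-symmetry $k_iQ_{ij}=m_jP_{ji}$). The only real obstacle is bookkeeping: one has to align the normalisation conventions for $Q$ and for the inner distribution stated in Section~\ref{sect:background} with the classical form of this identity, but the substance is routine expansion.

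Combining the two displays, the right-hand side equals $|\Omega|$. On the left, the $j=0$ summand contributes $|C|\cdot|S|$, since $m_0=1$, $(cQ)_0=|C|$ and $(sQ)_0=|S|$; the remaining summands are non-negative, because $(cQ)_j=(|\Omega|/|C|)\,\chi_C E_j\chi_C^{\top}\geq 0$ (the idempotent $E_j$ being positive semidefinite), and likewise for $s$. This yields $|C|\cdot|S|\leq|\Omega|$. Equality forces every $j\geq 1$ summand to vanish; as both $(cQ)_j$ and $(sQ)_j$ are non-negative, this is equivalent to $(cQ)_j=0$ or $(sQ)_j=0$ for every $j\geq 1$, i.e.\ to the dual degree sets of $C$ and $S$ being disjoint, which is precisely the statement that $C$ and $S$ are design-orthogonal.
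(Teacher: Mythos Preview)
The paper does not supply its own proof of this theorem; it is quoted as a known result of Delsarte (with a pointer also to \cite{gm}), so there is nothing in the paper to compare against line by line. Your argument is the standard Delsarte proof and is correct: the key steps are the vanishing of $c_is_i$ for $i\ne 0$, the orthogonality relation $\sum_j Q_{ij}Q_{lj}/m_j=|\Omega|\delta_{il}/k_i$, and the nonnegativity of the MacWilliams transform coming from the positive semidefiniteness of the idempotents. One minor remark on conventions: the paper's displayed formula $E_i=\sum_j Q_{ji}A_j$ appears to suppress the usual $1/|\Omega|$ factor, but the later identity $(aQ)_j=\tfrac{|\Omega|}{|C|}\chi_C E_j\chi_C^{\top}$ is the standard one, so your computation of $(cQ)_0=|C|$, $(sQ)_0=|S|$ and the nonnegativity claims go through exactly as written.
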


We will need the following simple observation,

\begin{corollary}[cf., {\cite[3.3 Corollary]{roos}}]\label{intersect1}
Let $\mathcal{A}$ be an association scheme on $\Omega$ and let $\Gamma$
 be a union of graphs in the scheme. If $C$ is a clique and $S$ is a coclique in $\Gamma$,
such that $|C|\cdot|S|= |\Omega|$, then $|C\cap S|=1$.
\end{corollary}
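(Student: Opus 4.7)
The plan is to combine the equality case of Theorem \ref{thm:CliquesInSchemes} (design-orthogonality of $C$ and $S$) with a direct spectral computation of the inner product $\chi_C^\top \chi_S = |C \cap S|$.

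First I would observe that, because $C$ is a clique and $S$ is a coclique in the \emph{same} graph $\Gamma$, any two vertices lying in $C \cap S$ would simultaneously have to be adjacent and non-adjacent, so automatically $|C \cap S| \le 1$. The real content of the corollary is therefore to produce at least one common element, i.e.\ to rule out $C \cap S = \emptyset$.

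To do this, I would decompose $\chi_C^\top \chi_S$ using the minimal idempotents $E_0, E_1, \ldots, E_d$ of the scheme. Since $\sum_{i=0}^d E_i = I$ and each $E_i$ is a symmetric idempotent,
\[
|C \cap S| \;=\; \chi_C^\top \chi_S \;=\; \sum_{i=0}^d \chi_C^\top E_i \chi_S \;=\; \sum_{i=0}^d (E_i \chi_C)^\top (E_i \chi_S).
\]
The $i = 0$ term equals $\tfrac{1}{|\Omega|}\chi_C^\top J \chi_S = \tfrac{|C||S|}{|\Omega|} = 1$ by hypothesis. For each $i \ge 1$, the hypothesis $|C|\cdot|S|=|\Omega|$ together with Theorem \ref{thm:CliquesInSchemes} tells us that $C$ and $S$ are design-orthogonal, so their dual degree sets are disjoint. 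Hence, for every $i \ge 1$, either $i$ is outside the dual degree set of $C$ (so $E_i \chi_C = 0$, using that $\|E_i\chi_C\|^2 = \chi_C^\top E_i \chi_C$) or $i$ is outside that of $S$ (so $E_i \chi_S = 0$); in either case the $i$-th summand vanishes.

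Putting the pieces together gives $|C \cap S| = 1$, as required. There is no real obstacle here: the only thing to be careful about is the characterisation of the dual degree set of a subset in terms of vanishing of $E_i \chi_C$, which follows from $E_i$ being a symmetric idempotent and is implicit in the MacWilliams transform definition given just before Theorem \ref{thm:CliquesInSchemes}.
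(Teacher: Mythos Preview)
Your argument is correct and is the standard spectral proof of this fact. The paper itself does not supply a proof of Corollary~\ref{intersect1}, merely citing Roos, so there is nothing to compare against; your decomposition $\chi_C^\top\chi_S=\sum_i (E_i\chi_C)^\top(E_i\chi_S)$ together with the equality case of Theorem~\ref{thm:CliquesInSchemes} is exactly the expected argument, and your preliminary observation that $|C\cap S|\le 1$ is a pleasant sanity check even though the spectral computation already pins the value down exactly.
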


\subsection{Diagonal actions}\label{sec:diagonal}

Let $T$ be a nonabelian simple group. Then
$G=T\times T$ acts on $\Omega=T$ in the \emph{diagonal action} as follows:
\[
t^{(x,y)}:=x^{-1}ty.
\]
This action is always primitive. Note that the direct factors $1\times T$ and $T\times 1$ are regular normal subgroups of $G$.  
Moreover, $G_{1_T}=\{(t,t)\mid t\in T\}$ induces the inner automorphism group $\Inn(T)$ on $\Omega$, and so its orbits on $\Omega$ are the conjugacy classes of $T$.

An \emph{orbital} for a group $G$ acting on a set $\Omega$ is an orbit of $G$ on 
$\Omega\times \Omega$. A graph with vertex set $\Omega$ is $G$-invariant if and only if its edge-set is a union of orbitals.
Every such graph
for $G=T\times T$ acting in its diagonal action on $T$ is a Cayley graph for $T$ with connection set being
a union of conjugacy classes of $T$.

Now if $T$ has an exact factorisation $T=AB$, consider $T$ acting on the set $\Sigma$ of right cosets of $A$. Then $B$ is a regular subgroup in this action and so is a transversal for the partition $\mathcal{P}$ of $\Omega=T$ into right cosets of $A$. Following \cite[Example 3.3]{Neumann}, for all $g=(x,y)\in G$, the image of $\mathcal{P}$ under $g$ is the partition of $T$ into right cosets of $A^{x}$. Since $B$ is also a regular subgroup for the action of $T$ on the set of right cosets of $A^{x}$ it follows that $B$ is also a transversal for the partition $\mathcal{P}^g$ and so $G$ is non-synchronising. Similarly, if $A\leqslant T$ and $B$ is a sharply-transitive set for the action of $T$ on the set of right cosets of $A$ then $B$ is a transversal for the partition of $T$ into the set of right cosets of $A^x$ in $T$, for any $x\in T$. (Recall that $B\subseteq T$ is a \emph{sharply-transitive set} for the action of $T$ on a set $\Sigma$ if for all $\alpha,\beta\in \Sigma$ there is a unique $b\in B$ such that $\alpha^b=\beta$.) Hence, $G$ is non-synchronising in this case as well, see \cite[\S 6.4]{survey}.

We show that if $T=\PSL(2,9)$ then the diagonal action of $T\times T$ on $T$ is non-synchronising even though $\PSL(2,9)$ does not have an exact factorisation. 

\begin{proposition}\label{A6}
Let $T=\PSL(2,9)$ and let $G$ be $T\times T$ acting in  diagonal action on $T$. Then $G$ is non-synchronising.
\end{proposition}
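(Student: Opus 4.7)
The plan is to apply the criterion developed in Section~\ref{sec:diagonal}: if we can exhibit a subgroup $A\leq T$ together with a sharply-transitive set $B\subseteq T$ for $T$ acting on the right cosets of $A$, then $G=T\times T$ is non-synchronising in its diagonal action on $T$. Since $\PSL(2,9)$ admits no exact factorisation (by It\^o's theorem), we cannot take $B$ itself to be a subgroup; the entire point of the ``slightly more general concept'' signalled in the introduction is that $B$ is allowed to be merely a set.

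The first step is to identify a candidate pair $(A,B)$. The transitive actions of $T\cong A_6$ of smallest degree have stabilisers $A_5$ (degree $6$), $3^2:4$ (degree $10$), and $S_4$ (degree $15$). A sharply-transitive set of size $n$ inside $T$ for one of these actions is the same combinatorial object as an $n\times n$ Latin square whose rows all lie in the image of $A_6$ under the corresponding permutation representation. I would try each of these actions in turn. In the degree-$6$ case the admissible non-identity rows are precisely the fixed-point-free even permutations of $A_6$ (of cycle types $3{+}3$ and $4{+}2$), but any attempt to extend such rows to a full Latin square runs into a parity obstruction: in any Latin square of order six the product of the row-signs is $-1$, so at least one row must be odd. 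This rules out the degree-$6$ action. For the degree-$10$ and degree-$15$ actions no such obstruction applies, since both factor through $A_{10}$ and $A_{15}$ respectively; a short backtracking search --- placing one candidate row at a time and pruning as soon as the column constraints make further extension impossible --- should locate a suitable sharply-transitive set.

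Once such a pair $(A,B)$ has been exhibited, the conclusion is immediate from the material preceding the proposition. For any $x\in T$, conjugation by $x$ identifies the coset action of $T$ on $T/A$ with its action on $T/A^x$, and sharp transitivity is preserved under this identification; hence $B$ is also a transversal for the partition $\mathcal{P}^x$ of $T$ into right cosets of $A^x$. The partition characterisation of non-synchronising groups from Section~\ref{sect:background} then gives that $G$ is non-synchronising.

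The main obstacle is the first step, namely certifying the existence of a sharply-transitive set in the degree-$10$ or degree-$15$ action. Once one produces an explicit candidate, verification is a routine finite check; the genuine work lies in the search itself, which is finite but non-trivial and most cleanly discharged by a short computation.
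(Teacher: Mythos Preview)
Your overall strategy --- exhibit a subgroup $A\leq T$ together with a sharply-transitive set $B\subseteq T$ for the action of $T$ on the cosets of $A$, then invoke the partition criterion from Section~\ref{sec:diagonal} --- is exactly the approach the paper takes. The problem is that your elimination of the degree-$6$ action is incorrect, and this is precisely the action the paper uses.

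Your parity claim, that ``in any Latin square of order six the product of the row-signs is $-1$'', is false. The paper takes $A=A_5$ (so the degree-$6$ natural action of $T\cong A_6$) and writes down the explicit set
\[
B=\{\,(),\ (1\,2)(3\,4\,5\,6),\ (1\,3)(2\,4\,6\,5),\ (1\,4)(2\,5\,3\,6),\ (1\,5)(2\,6\,4\,3),\ (1\,6)(2\,3\,5\,4)\,\}.
\]
Each non-identity element has cycle type $2{+}4$ and is therefore even, so $B\subseteq A_6$; and one checks directly that the six rows form a Latin square, i.e.\ $B$ is sharply transitive on six points. The product of the row-signs here is $+1$, contradicting your obstruction. (You may be thinking of the fact that for $n\equiv 2\pmod 4$ the rows of a \emph{group} Cayley table have sign-product $-1$; but nothing forces $B$ to be a subgroup, and indeed the whole point is that for $\PSL(2,9)$ it cannot be.)

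Because of this error, your proposal discards the one case that actually works and then defers the remaining work to an unperformed search in degrees $10$ and $15$, leaving the argument both misdirected and incomplete. The fix is simple: drop the parity paragraph, take $A=A_5$, and exhibit the set $B$ above (or any sharply-transitive set of even permutations of degree~$6$); the final paragraph of your proposal then finishes the proof exactly as the paper does.
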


\begin{proof}
First recall that $T\cong A_6$.
Let $A$ be $A_5\leqslant T$ and let $B$ be a sharply-transitive set of permutations in $A_6$,
in the action of $A_6$ on six points. (Note: $B$ is well-known to exist. For example, take 
\small{$\{ (), (1\,2)(3\,4\,5\,6), (1\,3)(2\,4\,6\,5), (1\,4)(2\,5\,3\,6), (1\,5)(2\,6\,4\,3), (1\,6)(2\,3\,5\,4)  \}$}). 
Then as discussed in the previous paragraph, $G$ is non-synchronising.
\end{proof}

\section{Analysis}

Let $T=\PSL(2,13)$ and let $G$ be $T\times T$ acting in diagonal action upon $T$. Now 
$T$ has eight conjugacy classes of nontrivial elements, which we label
\cc{2}, \cc{3}, \cc{6}, \cc{7A}, \cc{7B}, \cc{7C}, \cc{13A}, and \cc{13B}, according to the orders of the elements. 
For $I \subseteq \{\cc{2}, \cc{3}, \cc{6}, \cc{7A}, \cc{7B}, \cc{7C}, \cc{13A}, \cc{13B}\}$, we shall denote by $\Gamma_I$ the Cayley graph on $T$ whose connection set is the set of elements in the union of conjugacy classes determined by $I$. We suppress parentheses in the subscripts, so for example, $\Gamma_{\cc{3},\cc{6}}$ has as connection set the elements of order $3$ or $6$.  If $G$ were non-separating, then this would be witnessed by the existence of some nontrivial $G$-invariant graph $\Gamma$ such that $\alpha(\Gamma)\cdot \omega(\Gamma) = 1092$. We aim to show that no such graph exists, which ostensibly requires examining the 256 graphs of the form $\Gamma_I$. Without loss of generality we may assume that $I$ is non-empty, and as $\alpha(\Gamma) \cdot \omega(\Gamma)$ is invariant under graph complementation, we only need to check one of each pair of complementary graphs, thereby leaving $127$ distinct graphs to check.

Denote the union of conjugacy classes with elements of order $7$ and $13$ by \cc{7} and \cc{13}, respectively. The following proposition drastically reduces the number of graphs which must be considered down to $15$.

\begin{proposition}\label{prop:FewerGraphs}
The group $\PSL(2,13)\times \PSL(2,13)$ acting in its diagonal action on $\PSL(2,13)$ is separating if and only if $\alpha(\Gamma_I) \cdot \omega(\Gamma_I) \neq 1092$ for all $I \subseteq \{\cc{2}, \cc{3}, \cc{6}, \cc{7}, \cc{13}\}$.
\end{proposition}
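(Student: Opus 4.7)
The backward direction is trivial: any Galois-closed $I\subseteq\{\cc{2},\cc{3},\cc{6},\cc{7},\cc{13}\}$ is in particular a subset of the eight non-identity conjugacy classes of $\PSL(2,13)$, so the stated condition is a restriction of the separating condition to a subfamily of graphs. The substantive direction I would prove by contrapositive: assume $\alpha(\Gamma_I)\omega(\Gamma_I)=1092$ for some $I\subseteq\{\cc{2},\cc{3},\cc{6},\cc{7A},\cc{7B},\cc{7C},\cc{13A},\cc{13B}\}$ with witnessing max clique $K$ and max coclique $S$ (where $|K\cap S|=1$ by Corollary~\ref{intersect1}), and construct a Galois-closed $J$ with the same property.

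The engine is the Galois action on the conjugacy class scheme $\mathcal{A}$ of $T=\PSL(2,13)$: for each $m$ coprime to $|T|$, the map $\sigma_m\colon[g]\mapsto[g^m]$ permutes conjugacy classes and correspondingly sends each irreducible character $\chi$ to $\chi\circ\sigma_m$. A direct check shows the orbits on classes are $\{\cc{2}\}$, $\{\cc{3}\}$, $\{\cc{6}\}$, $\{\cc{7A},\cc{7B},\cc{7C}\}$, $\{\cc{13A},\cc{13B}\}$, so the Galois-closed subsets of the eight non-identity classes are exactly the subsets of $\{\cc{2},\cc{3},\cc{6},\cc{7},\cc{13}\}$. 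Since $\chi_K$ is integer-valued, the MacWilliams transform values $b^K_j=(|T|/|K|)\chi_K E_j\chi_K^\top$ satisfy $\sigma(b^K_j)=b^K_{\sigma j}$ (with $\sigma$ acting on eigenspace indices via the Galois action on characters), and as non-negative reals they vanish simultaneously along each Galois orbit; hence the dual degree set of $K$ is Galois-closed, and likewise for $S$. By Theorem~\ref{thm:CliquesInSchemes}, these Galois-closed dual degree sets are disjoint.

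I would then pass to the rationalised association scheme $\mathcal{A}_{\mathbb{Q}}$, whose relations are the unions of classes in each Galois orbit and whose eigenspaces are sums of $\mathcal{A}$-eigenspaces across each Galois orbit of characters. Since the dual degree sets of $K$ and $S$ are Galois-closed and disjoint, the pair $(K,S)$ remains design-orthogonal in $\mathcal{A}_{\mathbb{Q}}$ with $|K||S|=1092$, so the Delsarte bound is again tight. Letting $J\subseteq\{\cc{2},\cc{3},\cc{6},\cc{7},\cc{13}\}$ be the Galois-closed set of classes corresponding to the support of the inner distribution of $K$ in $\mathcal{A}_{\mathbb{Q}}$, one deduces that $K$ is a clique and $S$ a coclique in $\Gamma_J$, yielding $\alpha(\Gamma_J)\omega(\Gamma_J)=1092$.

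The main obstacle in this plan is the last step: showing that $S$ remains a coclique in the possibly enlarged graph $\Gamma_J$, equivalently that in $\mathcal{A}_{\mathbb{Q}}$ the rationalised inner distributions of $K$ and $S$ have disjoint supports. This is not automatic from their disjointness in $\mathcal{A}$, because a single Galois orbit of classes might a priori contain one class in the support of $a^K$ and another in the support of $a^S$. Ruling this out should come from combining tight Delsarte with the Galois closure of dual degree sets established above; concretely, I would try to show that $b^K$ is in fact rational on its support (so that the inner distribution $a^K$ is constant on Galois orbits of classes), which would hold automatically whenever $K$ or $S$ happens to be a coset of a subgroup. This is the technical heart of the proof and where I expect any careful write-up to spend most of its effort.
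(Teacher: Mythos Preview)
Your approach mirrors the paper's closely. Both pass to the fused (rationalised) scheme, which the paper calls $\mathcal{B}$, and both argue that for integer-valued vectors such as $\chi_C,\chi_S$, design-orthogonality in $\mathcal{A}$ is equivalent to design-orthogonality in $\mathcal{B}$. You phrase this via the Galois action $\sigma_m$ on classes and characters; the paper phrases it by exhibiting $Q^{\mathcal A}$ and $Q^{\mathcal B}$ explicitly and observing that the $F_i=E_{i_1}+\cdots$ are the minimal rational combinations of the $E_i$. These are the same argument in different clothing.

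The obstacle you isolate in your final paragraph --- that $S$ need not remain a coclique in the Galois-closed graph $\Gamma_J$, because a single Galois orbit of classes might contain one class from the support of $a^K$ and another from the support of $a^S$ --- is not something the paper addresses. The paper's entire justification is the sentence ``Applying Theorem~\ref{thm:CliquesInSchemes} again, we discover that $C$ and $S$ must be a maximum clique and coclique, respectively, for some $\Gamma_{I'}$ for $I' \subseteq \{\cc{6}, \cc{2}, \cc{3}, \cc{7}, \cc{13}\}$.'' But Theorem~\ref{thm:CliquesInSchemes} takes as \emph{hypothesis} that $C$ is a clique and $S$ a coclique in a graph of the scheme; it does not manufacture such a graph from design-orthogonality alone. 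So the published proof is at its tersest exactly where you anticipate the difficulty, and does not supply the argument you expected a careful write-up to contain. Your proposed fix (forcing $a^K$ to be constant on Galois orbits of classes) would certainly close the gap if it holds, but the paper does not pursue this or any alternative; it simply asserts the conclusion.
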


To prove Proposition \ref{prop:FewerGraphs}, we shall consider two association schemes. 
Let $A_I$ be the adjacency matrix of $\Gamma_I$ with respect to $\Omega$. Up to a reordering of relations, the orbitals of $G$ describe the following association scheme
\[
\mathcal{A}=(\Omega, \{A_\cc{1}, A_\cc{6}, A_\cc{2}, A_\cc{3}, A_{\cc{7A}}, A_{\cc{7B}}, A_{\cc{7C}}, A_{\cc{13A}}, A_{\cc{13B}} \}),
\]
where $A_\cc{1}$ is the identity matrix. We shall fix this ordering of the relations.  Note the unusual ordering of the $A_i$, which corresponds to the matrix of dual eigenvalues used in the following calculations.
It is well known that $\mathcal{A}$ is equal to an association scheme\footnote{In general, the orbitals of a transitive permutation group do not form a (symmetric) association scheme. They do, however, always form a homogeneous coherent configuration. In the cases considered in this paper, the conjugacy classes are all inverse-closed and so the group scheme is symmetric.} called the \emph{group scheme} for $T$, and the algebra generated by the adjacency matrices
is the centre of the group algebra $\mathbb{R}T$.
The matrix of dual eigenvalues $Q^\mathcal{A}$ is readily computable from the character table of $T$:
\[
Q^\mathcal{A}:=\small{
\begin{bmatrix}
 1& 49& 49& 144& 144& 144& 169& 196& 196\\
 1& -7& -7& 0& 0& 0& 13& -14& 14 \\
 1& -7& -7& 0& 0& 0& 13& 28& -28 \\
 1& 7& 7& 0& 0& 0& 13& -14& -14 \\
 1& 0& 0& -24\cos{\frac{2\pi}{7}}& -24\cos{\frac{4\pi}{7}}& -24\cos{\frac{6\pi}{7}}& -13& 0& 0\\
 1& 0& 0& -24\cos{\frac{6\pi}{7}}& -24\cos{\frac{2\pi}{7}}& -24\cos{\frac{4\pi}{7}}& -13& 0& 0 \\
 1& 0& 0& -24\cos{\frac{4\pi}{7}}& -24\cos{\frac{6\pi}{7}}& -24\cos{\frac{2\pi}{7}}& -13& 0& 0 \\
 1& \frac{7}{2}(1-\sqrt{13})& \frac{7}{2}(1+\sqrt{13})& -12& -12& -12& 0& 14& 14 \\
 1& \frac{7}{2}(1+\sqrt{13})& \frac{7}{2}(1-\sqrt{13})& -12& -12& -12& 0& 14& 14 
\end{bmatrix}.}
\]

Furthermore, we may fuse the relations \cc{7A}, \cc{7B}, and \cc{7C}, and also \cc{13A} and \cc{13B} to form a second association scheme
\[
\mathcal{B}=(\Omega, \{A_\cc{1}, A_\cc{6}, A_\cc{2}, A_\cc{3}, A_\cc{7}, A_{\cc{13}} \}),
\]
with matrix of dual eigenvalues
\[
Q^\mathcal{B}:=
\begin{bmatrix}
  1&   98&  432&  169&  196&  196\\
  1&  -14&    0&   13&  -14&   14 \\
  1&  -14&    0&   13&   28&  -28 \\
  1&   14&    0&   13&  -14&  -14 \\
  1&    0&   12&  -13&    0&    0 \\
  1&    7&  -36&    0&   14&   14 
\end{bmatrix}.
\]

For both $Q^\mathcal{A}$ and $Q^\mathcal{B}$, we index the rows by the corresponding relation (e.g., \cc{7A} indexes the 5th row of $Q^\mathcal{A}$) and the columns by the corresponding projection matrix.
Now, we have a decomposition of $\mathbb{R}^\Omega$ into simultaneous eigenspaces of $\mathcal{A}$
with projection matrices $E_0, E_1, \ldots, E_8$, given by
\[
E_i = \sum_{j \in \{\cc{1}, \cc{6}, \cc{2}, \cc{3}, \cc{7A}, \cc{7B}, \cc{7C}, \cc{13A}, \cc{13B}\} } Q^{\mathcal{A}}_{ji} A_j.
\]

Let $I \subseteq \{\cc{6}, \cc{2}, \cc{3}, \cc{7A}, \cc{7B}, \cc{7C}, \cc{13A}, \cc{13B}\}$, and let $C$ and $S$ be a maximum clique and coclique of $\Gamma_I$. Now by Theorem \ref{thm:CliquesInSchemes}, $|C| \cdot |S| = 1092$ if and only if their characteristic vectors $\chi_C$ and $\chi_S$ are design-orthogonal. Let $X$ and $Y$ be the dual degree sets of $\chi_C$ and $\chi_S$ respectively, then 
\[
\chi_C = \sum_{i \in X} \chi_C E_i = \chi_C \sum_{i \in X} E_i \quad \text{ and } \quad \chi_S = \sum_{i \in Y} \chi_S E_i = \chi_S\sum_{i \in Y} E_i.
\]
Take $i \in \{1,2,3,4,5\}$ and let $v$ be the first row of $E_i$. Then $v \in \rm{Im}(E_i)$ and has both rational and irrational entries, since $v$ is a linear combination of the first rows of the $A_j$ matrices, with coefficients given by the $i$-th column of $Q^{\mathcal{A}}$. Clearly $\langle v^G \rangle$ is therefore not a $G$-submodule over $\mathbb{Q}$. Assume for a contradiction that there is a vector $u \in \rm{Im}(E_i)$ with rational entries. Then $\langle u^G \rangle$ is clearly a $G$-submodule over $\mathbb{Q}$. However, since $\rm{Im}(E_i)$ is irreducible as a $G$-submodule over $\mathbb{R}$, we require $\langle v^G \rangle = \langle u^G \rangle$, a contradiction.
 Hence there are no rational vectors in the image of $E_i$ for $i \in \{1,2,3,4,5\}$. 

It is clear from studying the irrational entries of $Q^\mathcal{A}$ that
\begin{center}
\begin{tabular}{ lll } 
$F_0 := E_0, \quad\quad$ & $F_1 := E_1 + E_2, \quad\quad$ & $F_2 := E_3 + E_4 + E_5,$\\
$F_3 := E_6,$ & $F_4 := E_7,$ & $F_5 := E_8,$
\end{tabular}
\end{center}
are the unique smallest linear combinations of the projection matrices with rational entries.
Since $\chi_C$ and $\chi_S$ have rational entries, it is true that
\[
\chi_C = \sum_{i \in \overline{X}} \chi_C F_i  \quad \text{ and } \quad \chi_S = \sum_{i \in \overline{Y}} \chi_S F_i,
\]
where $\overline{X}$ and $\overline{Y}$ are the dual degree sets of $\chi_C$ and $\chi_S$ with respect to the new $F_i$ projection matrices. Clearly $\chi_C$ and $\chi_S$ are therefore design-orthogonal with respect to the real $E_i$ projection matrices if and only if they are design-orthogonal with respect to the rational $F_i$ projection matrices.
 Thus we have reduced the number of nontrivial projection matrices that we need to study from $8$ to $5$. 

We observe that the $F_i$ projection matrices are precisely the projection matrices onto the simultaneous eigenspaces of $\mathcal{B}$, given by
\[
F_i = \sum_{j \in \{1, 6, 2, 3, 7, 13\} } Q^{\mathcal{B}}_{ji} A_j.
\]
As a result, design-orthogonality of subsets of $\Omega$ with respect to $\mathcal{A}$ is equivalent to design-orthogonality with respect to $\mathcal{B}$.
\begin{proof}[Proof of Proposition \ref{prop:FewerGraphs}]
Let $C$ be a clique and $S$ be a coclique of $\Gamma_I$ for $I \subseteq \{\cc{6}, \cc{2}, \cc{3}, \cc{7A}, \cc{7B}, \cc{7C}, \cc{13A}, \cc{13B}\}$ such that $|C|\cdot |S|=1092$. Then by Theorem \ref{thm:CliquesInSchemes}, $S$ and $C$ are design-orthogonal with respect to the association scheme $\mathcal{A}$. However, we have shown that $\mathcal{A}$ and $\mathcal{B}$ have the same projection matrices over the rationals, and so $C$ and $S$ must also be design-orthogonal with respect to $\mathcal{B}$. Applying Theorem \ref{thm:CliquesInSchemes} again, we discover that $C$ and $S$ must be a maximum clique and coclique, respectively, for some $\Gamma_{I'}$ for $I' \subseteq \{\cc{6}, \cc{2}, \cc{3}, \cc{7}, \cc{13}\}$.
\end{proof}

Proposition \ref{prop:FewerGraphs} means that we need only take unions of some $G$-invariant graphs (and thus consider fewer $G$-invariant graphs in total) when determining whether or not $G$ is non-separating.
This reduction in $G$-invariant graphs corresponds to an association scheme, $\mathcal{B}$, which is a fusion of the association scheme determined by all the $G$-invariant graphs, $\mathcal{A}$.
Moreover, we have also demonstrated that we can apply the MacWilliams transform with respect to $\mathcal{B}$ (rather than $\mathcal{A}$) to determine design-orthogonality, and hence non-separation of a $G$-invariant graph, $\Gamma_I$.
 We do so now to eliminate many of the remaining $15$ (complementary pairs of) graphs of Proposition \ref{prop:FewerGraphs}.

\begin{lemma}\label{sixgraphs}
Let $\Gamma$ be a union of nontrivial graphs of the association scheme $\mathcal{B}$ such that 
\[
\alpha(\Gamma) \cdot \omega(\Gamma) = 1092.
\]
Then the inner distribution vectors, $a$ and $b$, of a maximum clique and maximum coclique, respectively, are one of the following pairs:
\begin{enumerate}[1.]
\item $(1,0,0,0,0,12)$ and $(1,14,7,14,48,0)$.
\item $(1,0,0,0,13,0)$ and $(1,26,13,26,0,12)$.
\item $(1,0,0,26,0,12)$ and $(1,7-t,t,0,20,0)$, $0\leqslant t\leqslant 7$.
\item $(1,0,0,14, 27,0)$ and $(1,13-t,t,0,0,12)$, $0\leqslant t\leqslant 13$.
\item $(1,0,13,0,0,12)$ and $(1,14-t,0,t,27,0)$, $0\leqslant t\leqslant 14$. 
\item $(1,0,7,0,20,0)$ and $(1,26-t,0,t,0,12)$, $0\leqslant t\leqslant 26$.
\end{enumerate}
\end{lemma}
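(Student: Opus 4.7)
The plan is to apply Theorem \ref{thm:CliquesInSchemes} and convert the resulting design-orthogonality condition into a finite enumeration. Since $|C|\cdot|S| = 1092 = |\Omega|$, that theorem applied to the scheme $\mathcal{B}$ yields $(aQ^\mathcal{B})_j \cdot (bQ^\mathcal{B})_j = 0$ for every $j \in \{1,\ldots,5\}$. Together with $a_0 = b_0 = 1$, the nonnegativity of $a_i$, $b_i$, $(aQ^\mathcal{B})_j$, and $(bQ^\mathcal{B})_j$, the product $(aQ^\mathcal{B})_0 \cdot (bQ^\mathcal{B})_0 = 1092$, and the disjointness of the supports $I_a, I_b \subseteq \{\cc{6}, \cc{2}, \cc{3}, \cc{7}, \cc{13}\}$ (forced since $C$ is a clique and $S$ a coclique of a common $\Gamma_I$, so $I_a \subseteq I$ and $I_b \cap I = \emptyset$), this reduces the problem to a finite collection of linear systems.

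I would enumerate by $|I_a|$. For $|I_a| = 1$, each choice of the singleton $I_a = \{i\}$ determines $a$ up to the scalar $a_i$; nonnegativity of $(aQ^\mathcal{B})_j$ bounds $a_i$, and the divisibility $|S| = 1092/|C|$ narrows the candidates further. A direct calculation shows that only $I_a = \{\cc{13}\}$ (forcing $a_{\cc{13}} = 12$, $|C| = 13$) and $I_a = \{\cc{7}\}$ (forcing $a_{\cc{7}} = 13$, $|C| = 14$) give consistent solutions, and solving the resulting linear system for $b$ then yields pairs 1 and 2 respectively. For $|I_a| = 2$, an analogous analysis applied to the supports $\{\cc{3}, \cc{13}\}, \{\cc{3}, \cc{7}\}, \{\cc{2}, \cc{13}\}, \{\cc{2}, \cc{7}\}$ yields pairs 3--6. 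The one-parameter family in each of these pairs arises because two specific rows of $Q^\mathcal{B}$ (namely $\cc{6}$ and $\cc{2}$ for pairs 3 and 4; $\cc{6}$ and $\cc{3}$ for pairs 5 and 6) agree in all columns where $(bQ^\mathcal{B})_j$ is constrained to vanish, leaving a one-dimensional freedom in $b$ parameterized by $t$.

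The main obstacle is the volume of case work needed to handle the remaining configurations: nominally there are $3^5 = 243$ support pairs $(I_a, I_b)$, each with up to $2^5 = 32$ ways to assign each $j \in \{1,\ldots,5\}$ as an annihilator of $a$ or of $b$. I would prune using two structural observations. First, columns 4 and 5 of $Q^\mathcal{B}$ differ only by sign in rows $\cc{6}, \cc{2}, \cc{3}$, producing a symmetry between pairs 3--4 and 5--6 that halves the analysis. Second, the divisibility $|C| \mid 1092 = 2^2 \cdot 3 \cdot 7 \cdot 13$, combined with the nonnegativity bounds on $(aQ^\mathcal{B})_j$, usually pins $|C|$ to a small number of values per support, most of which then fail to yield an integer-valued $b$ with $|S| = 1092/|C|$, or force a negative component in $b$ or $bQ^\mathcal{B}$. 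After this pruning only a short list of small linear systems remains to check by hand, each producing exactly one of the six families in the statement.
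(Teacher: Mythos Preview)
Your plan is correct and is essentially the same approach the paper takes: exploit Theorem~\ref{thm:CliquesInSchemes} to get design-orthogonality, combine it with $a\circ b=(1,0,\ldots,0)$, nonnegativity of the entries of $a$, $b$, $aQ^{\mathcal B}$, $bQ^{\mathcal B}$, and the divisibility constraint, then enumerate by the support of $a$.

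The one substantive simplification you are missing is complementation. Since $\alpha(\Gamma)\cdot\omega(\Gamma)$ is invariant under passing to the complementary graph, you may swap the roles of clique and coclique, and hence assume without loss of generality that $|I_a|\le |I_b|$; because $I_a$ and $I_b$ are disjoint subsets of a $5$-set this forces $|I_a|\le 2$. That single observation replaces your entire ``$3^5=243$ support pairs plus pruning'' paragraph and reduces the problem to exactly the $5+10=15$ cases you partially describe; the paper then disposes of each with a short linear computation. Your proposed symmetry between columns $4$ and $5$ of $Q^{\mathcal B}$ is not quite right (row~\cc{3} has entries $-14,-14$, not opposite signs), and in any case is unnecessary once the complementation reduction is in place. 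For the $|I_a|=2$ layer you should also note that the six supports you do not list (those containing \cc{6}, together with $\{\cc{2},\cc{3}\}$ and $\{\cc{7},\cc{13}\}$) either lead to a contradiction with nonnegativity/integrality or reproduce one of pairs 3--6 with the roles of $a$ and $b$ exchanged.
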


\begin{proof} Let $Q = Q^\mathcal{B}$ henceforth.
Since we are only interested in the graphs of $\mathcal{B}$ up to complementation, 
we may suppose without loss of generality that $a$ has at least three zero entries. 
The first entry of the inner distribution vector counts the average number of elements related to one another by the identity relation, and hence is always $1$, so we let
 $a:=(1,a_1,a_2,a_3,a_4,a_5)$ and let $b:=(1,b_1,b_2,b_3,b_4,b_5)$.
Note that the sum of the entries of $a$ and $b$, respectively, yields the size of the associated clique
or coclique.
Since $a$ and $b$ represent a clique and coclique for $\Gamma$, we have 
\begin{equation} \label{eq:CliqueCoclique}
a\circ b =(1,0,\ldots,0).
\end{equation}
By Theorem \ref{thm:CliquesInSchemes},
\begin{equation}
    (aQ)\circ (bQ)=(1092,0,\ldots,0). \label{eq:designortho}
\end{equation}
Moreover,

\noindent\begin{minipage}{.5\linewidth}
\footnotesize
\begin{align}
	aQ =& \nonumber \\
	\big(&\quad a_5+a_4+a_3+a_2+a_1+1, \label{eq:aQ0}\\
	&\quad 7(a_5+2a_3-2a_2-2a_1+14), \label{eq:aQ1}\\
	&\quad 12(-3a_5+a_4+36), \label{eq:aQ2}\\
	&\quad 13(-a_4+a_3+a_2+a_1+13), \label{eq:aQ3}\\
	&\quad 14(a_5-a_3+2a_2-a_1+14), \label{eq:aQ4}\\
	&\quad 14(a_5-a_3-2a_2+a_1+14) \label{eq:aQ5} \quad	\big),
\end{align}
\end{minipage}%
\begin{minipage}{.5\linewidth}
\footnotesize
\begin{align}
	bQ =& \nonumber\\
	\big(& \quad b_5+b_4+b_3+b_2+b_1+1, \label{eq:bQ0}\\
	&\quad 7(b_5+2b_3-2b_2-2b_1+14), \label{eq:bQ1}\\
	&\quad 12(-3b_5+b_4+36), \label{eq:bQ2}\\
	&\quad 13(-b_4+b_3+b_2+b_1+13), \label{eq:bQ3}\\
	&\quad 14(b_5-b_3+2b_2-b_1+14), \label{eq:bQ4}\\
	&\quad 14(b_5-b_3-2b_2+b_1+14) \label{eq:bQ5} \quad	\big).
\end{align}
\end{minipage}
\vspace{0.5cm}

Each entry of $aQ$ and $bQ$ is non-negative (because the $F_j$ are positive semidefinite), and the entries of $a$ and $b$ are non-negative rational numbers. This information yields a series of equations and inequalities. We will
undertake a step-by-step analysis. 
\begin{description}
\item[Case $a_1 = a_2 = a_3 = a_4 = 0$ and $a_5>0$]
Then (\ref{eq:aQ1}), (\ref{eq:aQ3}), (\ref{eq:aQ4}), and (\ref{eq:aQ5}) are all greater than zero.
For $a$ to be nontrivial (\ref{eq:aQ2}) must then be zero, hence $a = (1, 0, 0, 0, 0, 12)$.
By (\ref{eq:designortho}) we have (\ref{eq:bQ1}), (\ref{eq:bQ3}), (\ref{eq:bQ4}), and (\ref{eq:bQ5}) must all be zero, and (\ref{eq:bQ0}) equals $84$, and by (\ref{eq:CliqueCoclique}) we have $b_5=0$. So $b = (1, 14, 7, 14, 48, 0)$.

\item[Case $a_1 = a_2 = a_3 = a_5 = 0$ and $a_4>0$]
Then (\ref{eq:aQ1}), (\ref{eq:aQ2}), (\ref{eq:aQ4}), and (\ref{eq:aQ5}) are all greater than zero.
For $a$ to be nontrivial (\ref{eq:aQ3}) must then be zero, hence $a = (1, 0, 0, 0, 13, 0)$.
By (\ref{eq:designortho}), (\ref{eq:bQ0}) must equal $78$, and (\ref{eq:bQ1}), (\ref{eq:bQ2}), (\ref{eq:bQ4}), and (\ref{eq:bQ5}) must all be zero. Thus $b=(1,26,13,26,0,12)$.

\item[Case $a_1 = a_2 = a_4 = a_5 = 0$ and $a_3 > 0$]

Then (\ref{eq:aQ1}), (\ref{eq:aQ2}), (\ref{eq:aQ3}) are all greater than zero. Under the current assumption, for $a$ to be nontrivial, (\ref{eq:aQ4}) and (\ref{eq:aQ5}) must thus both equal to zero. Hence $a = (1, 0, 0, 14, 0, 0)$. However this means the size of the $a$ is $15$, which does not divide 1092, a contradiction. This case does not occur.

\item[Case $a_1 = a_3 = a_4 = a_5 = 0$ and $a_2> 0$]
Then (\ref{eq:aQ2}),  (\ref{eq:aQ3}),  (\ref{eq:aQ4}) are greater than zero, hence  (\ref{eq:aQ1}) and  (\ref{eq:aQ5}) must simultaneously be zero. So $a = (1, 0, 7, 0, 0, 0)$. However the size of the $a$, $8$, does not divide $1092$. This case does not occur.

\item[Case $a_2 = a_3 = a_4 = a_5 = 0$ and $a_1> 0$]
Then either $a_1 = 7$ or $a_1=14$ such that (\ref{eq:aQ1}) or (\ref{eq:aQ4}) is zero. In either case the size ($8$ or $15$) does not divide $1092$. This case does not occur.

\item[Case $a_1 = a_2 = a_3 =0$ and $a_4,a_5> 0$] 
Now, \eqref{eq:aQ1}, \eqref{eq:aQ4}, and \eqref{eq:aQ5} are all non-zero, hence \eqref{eq:bQ2}, \eqref{eq:aQ4}, \eqref{eq:aQ5} are all zero according to \eqref{eq:designortho}. We have $b_4=b_5=0$ by (\ref{eq:CliqueCoclique}), meaning $b = (1, 14, 7, 14, 0, 0)$. However the size of $b$, $36$, does not divide $1092$. This case does not occur.

\item[Case $a_1=a_2=a_4=0$ and $a_3,a_5> 0$]

Then $b_3=b_5=0$ and we have $a_3=26$, $a_5=12$, $b_4=20$ as well. Moreover, $a=(1,0,0,26,0,12)$ and $b=(1,7-b_2,b_2,0,20,0)$, $0\le b_2\le 7$.

\item[Case $a_1=a_2=a_5=0$ and $a_3,a_4> 0$]

Then $b_3=b_4=0$ and we have $b_5=12$, $a_3=14$ as well.
Moreover, $a=(1,0,0,14, 27,0)$ and $b=(1,13-b_2,b_2,0,0,12)$, $0\le b_2\le 13$.

\item[Case $a_1=a_3=a_4=0$ and $a_2,a_5> 0$]

If $a_2,a_5\ne 0$, then $b_2=b_5=0$ and we have $a_2=13$, $a_5=12$, $b_4=27$. Moreover, $a=(1,0,13,0,0,12)$ and $b=(1,14-b_3,0,b_3,27,0)$, $0\le b_3\le 14$.

\item[Case $a_1=a_3=a_5=0$ and $a_2,a_4> 0$]

If $a_2,a_4\ne 0$, then $b_2=b_4=0$ and we have $a_2=7$, $a_4=20$, and $b_5=12$. Moreover, $a=(1,0,7,0,20,0)$ and $b=(1,26-b_3,0,b_3,0,12)$, $0\le b_3\le 26$.

\item[Case $a_1=a_4=a_5=0$ and $a_2,a_3> 0$]

So $b_2=b_3=0$. Straight away, we have \eqref{eq:aQ2}, \eqref{eq:aQ3}, \eqref{eq:bQ5}$>0$
and so $-3b_5+b_4+36=0$, $-b_4+b_1+13=0$,
and $-a_3-2a_2+14=0$. Hence \eqref{eq:aQ4} $>0$
and $b_5-b_1+14=0$. However, then \eqref{eq:bQ1} $<0$,
which is a contradiction.
This case does not occur.

\item[Case $a_2=a_3=a_4=0$ and $a_1,a_5> 0$]

Here we have $b_1=b_5=0$, $a_5=12$ and two cases:
\begin{itemize}
    \item $a=(1,13,0,0,0,12)$, $b=(1,0,0,14,27,0)$.
    \item $a=(1,26,0,0,0,12)$, $b=(1,0,7,0,20,0)$.
\end{itemize} 
These cases already appear above.

\item[Case $a_2=a_3=a_5=0$ and $a_1,a_4> 0$]

Here, we have $b_1=b_4=0$, $b_5=12$ and two cases:
\begin{itemize}
\item $a=(1,7,0,0,20,0)$ and $b=(1,0,0,26,0,12)$.
\item $a=(1,14,0,0,27,0)$ and $b=(1,0,13,0,0,0)$.
\end{itemize}
These cases already appear above.

\item[Case $a_2=a_4=a_5=0$ and $a_1,a_3> 0$]

If $a_1,a_3\ne 0$, then 
\[
b=(1,0,91/5,0,156/5,112/5),
\]
which yields a size that is not an integer, so this case does not occur.

\item[Case $a_3=a_4=a_5=0$ and $a_1,a_2> 0$]

If $a_1,a_2\ne 0$, then \[
b=(1,0,0,91/2,117/2,63/2),
\]
which yields a size that is not an integer. This case does not occur.\qedhere
\end{description}
\end{proof}

Each combination of the inner distribution vectors given by Lemma \ref{sixgraphs} defines a complementary pair of graphs
of $\mathcal{B}$. For example, consider the fourth case where the inner distribution vectors are
$(1,0,0,14, 27,0)$ and $(1,13-t,t,0,0,12)$. Upon inspection of the positions of zeroes of these vectors we have three cases to consider. In the case that $0 < t < 13$, we are interested in $\omega(\Gamma_{\cc{3,7}})$ and $\omega(\Gamma_{\cc{6,2,13}})$. In the case that $t=0$, we are interested in $\omega(\Gamma_{\cc{3,7}})$ and $\omega(\Gamma_{\cc{6,13}})$. In the case that $t=13$, we are interested in $\omega(\Gamma_{\cc{3,7}})$ and $\omega(\Gamma_{\cc{2,13}})$. Since $\Gamma_{\cc{6,13}}$ and $\Gamma_{\cc{2,13}}$ are spanning subgraphs of $\Gamma_{\cc{6,2,13}}$, and  $\Gamma_{\cc{6,2,13}}$ is the complementary graph of $\Gamma_{\cc{3,7}}$, it is sufficient for us to consider $\omega(\Gamma_{\cc{3,7}})$ and $\alpha(\Gamma_{\cc{3,7}})$ in all cases. Thus the complementary pair of graphs that we are taking a maximum clique and coclique from is $\Gamma_{\cc{3,7}}$ or $\Gamma_{\cc{6,2,13}}$. 

We now have just six graphs to analyse, and by Lemma \ref{sixgraphs}, we know the only possibilities for $\alpha(\Gamma)$ and $\omega(\Gamma)$ when $\alpha(\Gamma) \cdot \omega(\Gamma) = 1092$.
We record this information in Table \ref{tab:cliquebounds}. Each of the six graphs has either a clique or a coclique of the size determined by Lemma~\ref{sixgraphs}, these are indicated by an asterisk in the table.

\begin{table}[!ht]
\begin{center}
\begin{tabular}{clcc} 
\toprule
Case&Graph $\Gamma$ & $\omega(\Gamma)$ & $\alpha(\Gamma)$ \\
\midrule
1&$\Gamma_{\cc{13}}$ & 13* & 84\\ % alpha = 78 (1 minute + 45 seconds with Gurobi)
2&$\Gamma_{\cc{7}}$ & 14 & 78* \\ % omega = 9
3&$\Gamma_{\cc{3,13}}$ & 39* & 28\\ % alpha = 22 (7 minutes in GRAPE)
4&$\Gamma_{\cc{3,7}}$ & 42 & 26* \\ % omega = 21
5&$\Gamma_{\cc{6,13}}$ & 26* & 42\\ % alpha = 25 (6 minutes in GRAPE)
6&$\Gamma_{\cc{6,7}}$ & 28 & 39*\\ % omega = 19
\bottomrule
\end{tabular}
\end{center}
\caption{Putative non-separating graphs to be considered for $q=13$.}\label{tab:cliquebounds}
\end{table}

We write $\alpha_I$ and $\omega_{I}$ for the coclique and clique numbers of $\Gamma_I$.
We shall say that $\Gamma_I$ is \emph{separating} if $\alpha_I \cdot \omega_I \neq 1092$ (so $G$ is separating if and only if all the graphs $\Gamma_I$ are separating).
Recall that if a graph $\Delta$ is a spanning subgraph of a graph $\Gamma$,
then $\alpha(\Delta)\ge \alpha(\Gamma)$ and $\omega(\Delta)\le \omega(\Gamma)$.

\begin{lemma}\label{twographsdone}\leavevmode
\begin{enumerate}[(i)]
\item If $\alpha_{\cc{6},\cc{13}}<42$, then $\omega_{\cc{3},\cc{7}}<42$ and the graph $\Gamma_{\cc{3},\cc{7}}$ is separating.
\item If $\alpha_{\cc{3},\cc{13}}<28$, then $\omega_{\cc{6},\cc{7}}<28$ and the graph $\Gamma_{\cc{6},\cc{7}}$ is separating.
\end{enumerate}
\end{lemma}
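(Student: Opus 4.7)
The plan is to combine the two elementary reductions noted immediately before the lemma: the spanning-subgraph inequality $\omega(\Delta)\leq\omega(\Gamma)$ when $\Delta$ is a spanning subgraph of $\Gamma$, together with the complement duality $\omega(\Gamma)=\alpha(\overline{\Gamma})$. Chaining these will bound the clique number of each target graph through an intermediate supergraph whose complement (inside $\Gamma_{\cc{2},\cc{3},\cc{6},\cc{7},\cc{13}}$) is precisely the graph whose coclique number appears in the hypothesis. To close, I will invoke Table~\ref{tab:cliquebounds}, a direct consequence of Lemma~\ref{sixgraphs}, which pins down the unique clique size compatible with non-separation for each of the two target graphs; the derived strict inequality then rules this size out.

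For part~(i), the connection set of $\Gamma_{\cc{3},\cc{7}}$ is contained in that of $\Gamma_{\cc{2},\cc{3},\cc{7}}$, so the former is a spanning subgraph of the latter and $\omega_{\cc{3},\cc{7}}\leq\omega_{\cc{2},\cc{3},\cc{7}}$. Since the connection sets of $\Gamma_{\cc{2},\cc{3},\cc{7}}$ and $\Gamma_{\cc{6},\cc{13}}$ partition $\{\cc{2},\cc{3},\cc{6},\cc{7},\cc{13}\}$, these two graphs are complementary, and hence $\omega_{\cc{2},\cc{3},\cc{7}}=\alpha_{\cc{6},\cc{13}}$. Under the hypothesis, the chain delivers $\omega_{\cc{3},\cc{7}}\leq\alpha_{\cc{6},\cc{13}}<42$. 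Row~4 of Table~\ref{tab:cliquebounds} records that the only clique number compatible with $\alpha_{\cc{3},\cc{7}}\cdot\omega_{\cc{3},\cc{7}}=1092$ is $\omega_{\cc{3},\cc{7}}=42$, which the strict inequality excludes; therefore $\Gamma_{\cc{3},\cc{7}}$ is separating.

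Part~(ii) runs in parallel. The graph $\Gamma_{\cc{6},\cc{7}}$ is a spanning subgraph of $\Gamma_{\cc{2},\cc{6},\cc{7}}$, whose complement inside $\Gamma_{\cc{2},\cc{3},\cc{6},\cc{7},\cc{13}}$ is $\Gamma_{\cc{3},\cc{13}}$; hence $\omega_{\cc{6},\cc{7}}\leq\omega_{\cc{2},\cc{6},\cc{7}}=\alpha_{\cc{3},\cc{13}}<28$ by the hypothesis. Row~6 of Table~\ref{tab:cliquebounds} identifies $\omega_{\cc{6},\cc{7}}=28$ as the only non-separating possibility, which is now excluded.

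I do not expect any genuine obstacle: the lemma is essentially a packaging step, converting the two dualities into a form that can be slotted into the computational endgame. The only real checking is that, in each case, the connection sets of the chosen intermediate supergraph and of the hypothesised bounding graph indeed partition $\{\cc{2},\cc{3},\cc{6},\cc{7},\cc{13}\}$, which is routine set arithmetic in both cases.
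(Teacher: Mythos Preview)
Your proof is correct and follows essentially the same argument as the paper: pass to the supergraph $\Gamma_{\cc{2},\cc{3},\cc{7}}$ (resp.\ $\Gamma_{\cc{2},\cc{6},\cc{7}}$), use complementation to identify its clique number with $\alpha_{\cc{6},\cc{13}}$ (resp.\ $\alpha_{\cc{3},\cc{13}}$), and then read off the conclusion from Table~\ref{tab:cliquebounds}. The paper's proof is terser but the chain of inequalities and the invocation of the table are identical.
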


\begin{proof}
Since $\Gamma_{\cc{3},\cc{7}}$ is an spanning subgraph of $\Gamma_{\cc{2},\cc{3},\cc{7}}$, we have
$\omega_{\cc{3},\cc{7}}\le \omega_{\cc{2},\cc{3},\cc{7}}=\alpha_{\cc{6},\cc{13}}$.
Likewise, $\omega_{\cc{6},\cc{7}}\le \omega_{\cc{2},\cc{6},\cc{7}}=\alpha_{\cc{3},\cc{13}}$.
\end{proof}

\begin{lemma}\label{gamma7}
$\Gamma_\cc{7}$ is separating.
\end{lemma}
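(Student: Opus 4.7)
By case 2 of Lemma \ref{sixgraphs}, a failure of separation for $\Gamma_\cc{7}$ forces the simultaneous existence of a clique of size $14$ and a coclique of size $78$, so my plan is to rule out the $14$-clique. To set the scene, I would first compute the spectrum of $A_\cc{7}$ via the standard duality $P_{ij} = (k_j/m_i)Q_{ji}$, applied to the row $(1, 0, 12, -13, 0, 0)$ of $Q^\mathcal{B}$ labelled by $\cc{7}$, with multiplicities $(1, 98, 432, 169, 196, 196)$ and valency $k_\cc{7} = 468$. This yields eigenvalues $\{468, 0, 13, -36, 0, 0\}$. The Hoffman ratio bound then gives $\alpha(\Gamma_\cc{7}) \leq 1092 \cdot 36/(468 + 36) = 78$; combined with Theorem \ref{thm:CliquesInSchemes}, this forces $\omega(\Gamma_\cc{7}) \leq 1092/78 = 14$. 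Equality in the coclique bound is witnessed by the Borel subgroup $M \cong C_{13} \sd C_6$ of $T = \PSL(2,13)$, which has order $78$ and contains no element of order $7$.

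The key structural observation is that the $14$ right cosets of $M$ partition $\Omega = T$ into $78$-cocliques and thereby furnish a proper $14$-colouring of $\Gamma_\cc{7}$; any hypothetical $14$-clique $C$ must then be a transversal of this colouring. After left-translating we may assume $1 \in C$, so the remaining $13$ elements of $C$ are order-$7$ elements lying in $13$ distinct non-identity right cosets of $M$, with all pairwise quotients also of order $7$. Each Sylow $7$-subgroup of $T$ contributes at most its $6$ non-identity elements to $C$; by Dickson's classification of the subgroups of $\PSL(2,13)$, any two distinct Sylow $7$-subgroups generate the entire group, so the ``cross-Sylow'' constraints on the quotients are highly restrictive.

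The remaining step is to leverage this structure into a contradiction. My plan is a small case analysis on the distribution of the $13$ non-identity clique elements among the $78$ Sylow $7$-subgroups: at least three Sylows must be used (profiles $6{+}6{+}1$, $6{+}5{+}2$, $5{+}5{+}3$, and so on), and each profile imposes strong conditions on pairwise products. Each case can be attacked either by exploiting the $\SL(2,13)$ trace identity $\operatorname{tr}(gh) + \operatorname{tr}(g^{-1}h) = \operatorname{tr}(g)\operatorname{tr}(h)$ to convert the constraints into polynomial equations in the traces of the clique elements, or by a direct computer-algebra search for a $13$-set among the $468$ order-$7$ elements of $T$ with the required pairwise property, exploiting the vertex-transitivity of $\Gamma_\cc{7}$ to fix one element. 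The main obstacle is that the Hoffman bound is already tight at $\omega \le 14$, so any proof of strict inequality must use structural information beyond the spectrum; I expect the computational route to be cleanest in practice, with the Borel-transversal structure cutting the search space dramatically.
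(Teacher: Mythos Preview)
Your setup mirrors the paper's: both exhibit the Borel subgroup $M$ of order $78$ as a maximum coclique and deduce that a hypothetical $14$-clique $C$ meets each right coset of $M$ exactly once. The paper then makes the one further observation you stop just short of: since every conjugate $T_\omega$ of $M$ (i.e., every point stabiliser in the natural action of $T=\PSL(2,13)$ on a $14$-set $\Sigma$) is equally a $78$-coclique, $C$ is a transversal of \emph{every} such coset partition. Equivalently, for each $\omega\in\Sigma$ the map $g\mapsto\omega^g$ is a bijection $C\to\Sigma$, so $C$ is a sharply transitive set of permutations of $\Sigma$ inside $\PSL(2,13)$; the non-existence of such a set is then quoted as known, finishing the proof with no case analysis or search. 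Your proposed endgame (a Sylow-profile case split via trace identities, or a direct search among the $468$ order-$7$ elements) would certainly succeed---the search space is small---but it replaces this one-line conceptual step with computation. Promoting your transversal remark from ``one Borel'' to ``all Borels simultaneously'' is free and gives the cleaner finish.
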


\begin{proof}
Consider the natural action of $T$ on a set $\Sigma$ of size 14. Then a point stabiliser $T_\omega$ forms a
coclique of $\Gamma_\cc{7}$. To see why, notice that if $x,y\in T_\omega$, then
$xy^{-1}\in T_\omega$, and in particular, $xy^{-1}$ does not have order 7 (because $|T_\omega|=6\times 13$).
Moreover, by Table \ref{tab:cliquebounds}, $T_\omega$ yields a coclique of maximum size. Suppose that $C$ is a clique of $\Gamma_{\cc{7}}$. If $g,h\in C$ such that there exists $\omega\in\Sigma$ for which $\omega^g=\omega^h$, then $gh^{-1}\in T_\omega$. However, this contradicts the elements of $T_\omega$ having order coprime to 7.   Thus $C$ is a sharply-transitive set of permutations in $T$
(on $\Sigma$). Such a set does not exist (cf., \cite{mathoverflow}).
\end{proof}

So we have reduced the problem to three graphs: $\Gamma_{\cc{13}}$, $\Gamma_{\cc{6},\cc{13}}$, $\Gamma_{\cc{3},\cc{13}}$.

\begin{lemma} \label{TwoOfTheTrickyOnes}
$\alpha(\Gamma_{\cc{6},\cc{13}})=25$ and $\alpha(\Gamma_{\cc{3},\cc{13}})=22$. Therefore, 
$\Gamma_{\cc{6},\cc{13}}$ and $\Gamma_{\cc{3},\cc{13}}$ are separating.
\end{lemma}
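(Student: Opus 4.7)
The plan is to determine these two coclique numbers exactly, by a mixture of explicit construction for the lower bounds and a computer-assisted search for the upper bounds. Once both values are in hand, separation of the two graphs is immediate from Table~\ref{tab:cliquebounds}: since $25 \cdot 26 = 650 < 1092$ and $22 \cdot 39 = 858 < 1092$, neither graph can witness non-separation.

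For the lower bounds I would exhibit explicit cocliques of the claimed sizes. A subgroup $H \le T$ is a coclique of $\Gamma_{\cc{6},\cc{13}}$ precisely when $H$ contains no element of order $6$ or $13$: the alternating subgroup $A_4 \le T$, whose nontrivial elements have order $2$ or $3$, is such a coclique of size $12$. Likewise, the dihedral subgroup $D_{14} \le T$ (with nontrivial element orders $2$ and $7$) is a coclique of $\Gamma_{\cc{3},\cc{13}}$ of size $14$. These subgroup cocliques give natural seeds for a targeted computer search — for example greedy extension followed by local improvement, or enumeration of extensions within unions of cosets of the base subgroups — which should reach the claimed sizes $25$ and $22$.

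For the upper bounds, which constitute the substantive computational content, the Delsarte bound from the association scheme $\mathcal{B}$ is insufficient, because by Lemma~\ref{sixgraphs} it yields precisely the non-separating thresholds $\alpha = 42$ and $\alpha = 28$. The plan is therefore a direct maximum independent set computation, made tractable by exploiting symmetry. Both graphs are Cayley graphs of $T$ whose connection sets are unions of $\Aut(T)$-invariant conjugacy classes, so their automorphism groups contain $T \rtimes \Aut(T)$ and act transitively on $\Omega$. One may therefore fix $1_T$ to lie in any purported maximum coclique and search only among the non-neighbours of $1_T$, further reducing by the point stabiliser $\Aut(T)$. Branch-and-bound seeded with the explicit lower-bound cocliques, combined with standard colouring or fractional-chromatic pruning, should rule out cocliques of sizes $\ge 26$ and $\ge 23$ respectively.

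The principal obstacle is the sheer size of the search: at $1092$ vertices and an edge density of roughly $32\%$ in both graphs, naive enumeration is hopeless, so care is needed to ensure that the branch-and-bound is both complete and reproducible. That said, this kind of computation is within reach of modern maximum-clique solvers applied to the complement graphs, once the symmetry of $T \rtimes \Aut(T)$ has been used to collapse the branching factor to manageable levels.
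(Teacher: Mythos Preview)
Your proposal is correct and takes essentially the same approach as the paper: a direct computer determination of the maximum coclique in each graph. The paper's proof is terser still --- it simply runs \textsc{Grape}'s clique-finder (which already exploits the supplied automorphism group internally) on the complements and reports $\alpha=25$ and $\alpha=22$ in under ten minutes each, without the explicit subgroup seeds or bespoke symmetry reduction you describe.
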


\begin{proof}
We used \textsc{Grape} \cite{grape}, and its basic clique-finding algorithm on a personal laptop. It took
less than 10 minutes each in each case to determine that $\alpha(\Gamma_{\cc{6},\cc{13}})=25$
and $\alpha(\Gamma_{\cc{3},\cc{13}})=22$.
\end{proof}

\begin{lemma} \label{gamma13}
$\Gamma_{\cc{13}}$ is separating.
\end{lemma}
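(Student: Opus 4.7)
The plan is to show that $\alpha(\Gamma_{\cc{13}})<84$, which together with Table~\ref{tab:cliquebounds} is enough to establish separation. The first step is to identify the maximum cliques of $\Gamma_{\cc{13}}$. A clique of size~$13$ containing the identity consists of $1$ together with $12$ elements of order~$13$ whose pairwise ratios also have order~$13$; and any two elements $x,y\in T=\PSL(2,13)$ of order~$13$ with $y\notin\langle x\rangle$ would generate a subgroup lying in the intersection $N_T(\langle x\rangle)\cap N_T(\langle y\rangle)$ of two distinct Borel subgroups, which is a two-point stabiliser of order~$6$ and hence contains no element of order~$13$. Thus any such clique is a Sylow $13$-subgroup, and in general the maximum cliques of $\Gamma_{\cc{13}}$ are exactly the right cosets $Pg$ of Sylow $13$-subgroups of $T$.

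Suppose for contradiction that $S$ is a coclique of size~$84$. By Corollary~\ref{intersect1}, $|S\cap Pg|=1$ for every Sylow $13$-subgroup $P$ and every $g\in T$, so $S$ is a simultaneous right transversal for all fourteen Sylow $13$-subgroups of $T$. After right-translating I may assume $1\in S$, which in particular rules out elements of order~$13$ in $S$. Considering the natural $2$-transitive action of $T$ on $\Sigma=\mathbb{P}^1(\mathbb{F}_{13})$, each point stabiliser $T_\omega$ is a Frobenius group $\mathbb{Z}_{13}\rtimes\mathbb{Z}_6$ of order~$78$ whose Sylow $13$-subgroup partitions $T_\omega$ into six right cosets, so $|S\cap T_\omega|=6$ for every $\omega\in\Sigma$. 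Summing fixed-point counts, $\sum_{s\in S}|\mathrm{Fix}(s)|=14\cdot 6=84$, and using the fixed-point profile ($14$ for the identity; $2$ for orders~$2$, $3$, and $6$; $0$ for order~$7$) pins down $S$ as the disjoint union of $\{1\}$, exactly $48$ elements of order~$7$, and exactly $35$ elements of orders in $\{2,3,6\}$.

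The characteristic vector of any Sylow $13$-subgroup has dual degree set $\{1,3,4,5\}$ (readable off $Q^{\mathcal{B}}$), so by design-orthogonality $\chi_S$ has dual degree set contained in $\{2\}$. Equivalently, $\sum_{s\in S}\rho(s)=0$ for every irreducible representation of $T$ outside the trivial and the three $12$-dimensional (discrete-series) representations. Applied to the Steinberg representation---which, together with the trivial, makes up the permutation representation on $\Sigma$---this forces the $14\times 14$ matrix $N$ with entries $N(\omega,\omega')=|\{s\in S:s(\omega)=\omega'\}|$ to equal $6J_{14}$, so every ordered pair of points of $\Sigma$ is realised by exactly six elements of $S$. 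Analogous uniformity conditions follow from the vanishing of $\sum_{s\in S}\rho(s)$ on the two $7$-dimensional and two $14$-dimensional irreducibles.

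The main obstacle will be converting this collection of structural constraints into an outright contradiction. Following the precedent of Lemma~\ref{TwoOfTheTrickyOnes}, I would complete the argument by a targeted computer search in \textsc{Grape}~\cite{grape}: the fixed element-order profile, the simultaneous right-transversal property, and the uniform six-fold action on ordered pairs constrain a putative coclique of size~$84$ so tightly that a branching search over partial transversals for a fixed Sylow $13$-subgroup---aggressively pruned using the remaining Sylows and the spectral conditions---should certify non-existence in tractable time, completing the proof that $\alpha(\Gamma_{\cc{13}})<84$.
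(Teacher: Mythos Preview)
Your structural analysis in the second and third paragraphs is correct and illuminating: the use of Corollary~\ref{intersect1} to deduce that a hypothetical $84$-coclique is a simultaneous right transversal for all fourteen Sylow $13$-subgroups, the fixed-point count pinning down the element-order profile, and the design-orthogonality argument forcing $N=6J_{14}$ are all sound. (One quibble: the assertion in your first paragraph that $\langle x,y\rangle\subseteq N_T(\langle x\rangle)\cap N_T(\langle y\rangle)$ is unjustified---there is no reason $x$ should normalise $\langle y\rangle$---but you never actually use the claim that Sylow cosets are the \emph{only} $13$-cliques; for Corollary~\ref{intersect1} you need only that they \emph{are} $13$-cliques, which is immediate.)

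The genuine gap is that the proof is not finished: you explicitly concede that converting these constraints into a contradiction is ``the main obstacle'' and propose a computer search that you describe only hypothetically (``should certify non-existence in tractable time''). The paper's proof is also computational, but is both more direct and actually carried out. It uses the very same transversal observation---that a coclique meets each $\Aut(\Gamma_{\cc{13}})$-translate of a fixed $13$-clique $C$ in at most one point---encodes this as the integer linear constraint $Mv\preccurlyeq(1,\ldots,1)$ on the characteristic vector $v$, and solves the resulting ILP with \textsf{Gurobi} to obtain $\alpha(\Gamma_{\cc{13}})=78<84$. Your representation-theoretic refinements, while correct, do not by themselves yield a contradiction; the paper's route trades that extra structure for a clean solver-based certificate and, as a bonus, the exact value of~$\alpha$ rather than merely the exclusion of~$84$.
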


\begin{proof}
From Table \ref{tab:cliquebounds}, a maximum clique in $\Gamma_{13}$ has size at most 13. Let $C$ be a cyclic subgroup of $T$ of order 13. 
Then the elements of $C$ form a maximum clique of $\Gamma_{\cc{13}}$. To see why,
let $g,h\in C$. Then $gh^{-1}\in C$ and hence $gh^{-1}$ has order 13.
 Note that $\Aut(\Gamma_{\cc{13}})$
acts transitively on the vertices of $\Gamma_{\cc{13}}$ as it contains $G$.
Let $M$ be the matrix whose rows are the characteristic vectors of each $C^g$,
where $g\in \Aut(\Gamma_{\cc{13}})$. Then the characteristic vector of a coclique is a $\{0,1\}$--vector
$v$ such that $Mv\preccurlyeq (1,1,\ldots, 1)$. This gives us an integer linear program,
where our objective function is the sum of the values of $v$.
Using the mixed integer linear programming (MILP) software \textsf{Gurobi} \cite{gurobi}, we find 
that $\alpha(\Gamma_{\cc{13}})$ is 78. However, $13\times 78<1092$
and hence $\Gamma_{\cc{13}}$ is separating.
\end{proof}

\begin{proof}[Proof of Theorem \ref{main1}]
For $q=13$, the proof follows from 
Proposition \ref{prop:FewerGraphs}, and Lemmas \ref{sixgraphs}, \ref{twographsdone}, \ref{gamma7}, \ref{TwoOfTheTrickyOnes}, and \ref{gamma13}.
For $q=17$ the analysis proceeds in the same manner. An analogous result to Proposition \ref{prop:FewerGraphs} reduces the number of $G$-invariant graphs which need to be considered from $255$ to $31$. With the aid of the computer algebra package, \textsf{Mathematica} \cite{mathematica}, a result analogous to Lemma \ref{sixgraphs} was obtained, leaving only $23$ graphs for consideration. These graphs were ruled out computationally by using \textsc{Grape} \cite{grape} in some instances, and in other instances, we can formulate a constraint satisfaction problem from the fact (see Corollary \ref{intersect1}) that a maximum clique and maximum coclique would have to meet in precisely one element (as described in the proof of Lemma \ref{gamma13}). We used \textsf{Gurobi} \cite{gurobi} for these constraint satisfaction problems, and verified some cases with \textsf{Minion} \cite{minion}.
See Table \ref{tbl:graphs17} in Appendix \ref{append:Cases17} for details on these graphs, including which bounds were obtained (indicated by an asterisk), and which methods were used. Graphs $\Gamma_I$ are labelled according to the conjugacy classes of elements of order $2, 3, 4, 8, 9, 17$ in the same manner as for $q=13$. For the reader interested in verifying these computational results, supplementary materials have been made available at \cite{SupplementaryMaterials}.
\end{proof}

\section{Non-spreading}

We already know from our knowledge of exact factorisations of $\PSL(2,q)$ that $G:=\PSL(2,q)\times \PSL(2,q)$ in its diagonal action
is non-separating, and hence non-spreading,
when $q$ is even or when $q\equiv 3\pmod{4}$. So let $q$ be an odd prime power such that $q\equiv 1\pmod{4}$, and let $T=\PSL(2,q)$.
Let $T_1$ be a point stabiliser in the natural action of $T$ on $q+1$ points.
Next, let 
\[
T_1^2:=\{t^2 : t\in T\}.
\]
This setup will provide us with the ingredients we need to exhibit a multiset $A$
and a set $B$ with the desired properties of a witness to $G$ being non-spreading.

\begin{lemma}\label{nicepropPSL}\samepage
Let $T=\PSL(2,q)$, with $q\equiv 1\pmod{4}$, and let $T_1$ and $T_2$ be two different point stabilisers in the natural
$2$-transitive action of $T$ (on $q+1$ points). Then 
\[
|T_1^2\cap T_2t|=\tfrac{1}{2}|T_1\cap T_2t|
\]
for all $t\in T$.
\end{lemma}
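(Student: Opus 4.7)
The plan is to work in $\SL(2,q)$ with explicit coordinates and to detect the squares inside $T_1$ using a linear character that restricts non-trivially to $T_1 \cap T_2$. Using the natural $2$-transitive action, I would take $T_1$ and $T_2$ as the images in $\PSL(2,q)$ of the upper and lower triangular Borel subgroups of $\SL(2,q)$, so that $H := T_1\cap T_2$ is the image of the diagonal torus, a cyclic group of order $(q-1)/2$. A short calculation with a lift of $t$ to $\SL(2,q)$ shows that $T_1 \cap T_2 t$ is either empty (when $t$ sends $\infty$ to $0$, in which case both sides of the claimed identity vanish) or a single left coset $Hx$ of $H$, of size $(q-1)/2$.

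Next, I would introduce a character $\chi \colon T_1 \to \{\pm 1\}$ defined by $\chi(\bar g) := \varepsilon(a)$, where $\varepsilon$ is the quadratic residue character on $\mathbb{F}_q^\times$ and $a$ is the $(1,1)$-entry of any $\SL(2,q)$-lift of $\bar g$. Because $q\equiv 1\pmod 4$ we have $\varepsilon(-1)=1$, so $\chi$ is well-defined on $T_1$; and multiplicativity of $\varepsilon$, combined with the fact that the $(1,1)$-entry of a product of upper-triangular matrices is the product of their $(1,1)$-entries, makes $\chi$ a group homomorphism.

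The crux of the argument is to identify $\ker\chi$ with $T_1^2$, interpreted as the set of squares of elements of $T_1$. One inclusion is immediate from squaring an upper-triangular matrix. For the other, given $\bar g$ with $(1,1)$-entry $a = e^2$ a non-zero square, one constructs a square root in $T_1$ from a lift $\bigl(\begin{smallmatrix} e & f \\ 0 & e^{-1}\end{smallmatrix}\bigr)$ by solving a linear equation for $f$; the only delicate case is $a=-1$, where this equation is degenerate, but then replacing the $\SL(2,q)$-lift of $\bar g$ by its negative (whose $(1,1)$-entry becomes $+1$) circumvents the obstruction. I expect this case analysis to be the main obstacle, though it remains quite short.

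To conclude, the restriction $\chi|_H$ is non-trivial because $\mathbb{F}_q^\times$ contains non-squares, so $\ker(\chi|_H)$ has index $2$ in $H$. Hence in any coset $Hx \subseteq T_1$ exactly half of the $|H|$ elements lie in $\ker\chi = T_1^2$, giving
\[
|T_1^2\cap T_2 t| \;=\; |T_1^2\cap Hx| \;=\; \tfrac{1}{2}|H| \;=\; \tfrac{1}{2}|T_1\cap T_2 t|
\]
whenever $T_1 \cap T_2 t$ is non-empty, and the empty case is trivial.
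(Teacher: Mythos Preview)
Your proof is correct and complete. Both you and the paper begin by recognising that $T_1\cap T_2t$ is either empty (in which case both sides vanish) or a single coset $Hx$ of $H=T_1\cap T_2\cong C_{(q-1)/2}$ inside $T_1$; this follows from orbit--stabiliser for the transitive action of $T_1$ on the $q$ points other than the one it fixes.

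Where the arguments diverge is in the halving step. The paper asserts the structural fact $T_1^2\cong[q]\sd C_{(q-1)/4}$ and then reruns the orbit--stabiliser argument with $T_1^2$ in place of $T_1$: since the unipotent radical $[q]$ is still present, $T_1^2$ again acts transitively on the $q$ points, now with point stabiliser of order $(q-1)/4$, whence $|T_1^2\cap T_2t|=(q-1)/4$. You instead realise $T_1^2$ as the kernel of the quadratic character $\chi\colon T_1\to\{\pm1\}$, verify the equality $T_1^2=\ker\chi$ by an explicit square-root construction (handling the degenerate $a=-1$ case by passing to the other $\SL(2,q)$-lift), and then observe that $\chi|_H$ is surjective, so every $H$-coset in $T_1$ is bisected by $\ker\chi$. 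Your route has the virtue of \emph{proving} that $T_1^2$ is a subgroup of index $2$ rather than asserting it, and the character viewpoint makes the halving transparent without a second appeal to orbit--stabiliser. The paper's route is shorter but leaves more to the reader.

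Two cosmetic points: what you call a ``left coset $Hx$'' would more commonly be called a right coset; and the empty case occurs when $t$ sends $0$ to $\infty$ (not $\infty$ to $0$) under the usual convention that upper-triangular matrices stabilise $\infty$. Neither affects the argument.
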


\begin{proof}
Since $T$ is $2$-transitive, we may suppose that $T_1$ and $T_2$ are the stabilisers of the points $1$ and 
$2$ in the action of $T$ on $\{1,\ldots, q+1\}$.
First, we note that $T_1$ has structure $[q] \sd C_{(q-1)/2}$ and $T_1^2$ is actually a subgroup with structure
$[q]\sd C_{(q-1)/4}$. Let $t\in T$. Now $T_1\cap T_2t$ is the set of elements of $T$ that fix $1$ and map $2$ to $2^t$.
The stabiliser $T_1\cap T_2$ is the top group $C_{(q-1)/2}$ acting semiregularly on the remaining points,
with two orbits $\mathcal{O}$ and $\mathcal{O}'$, each of size $(q-1)/2$. Therefore,
$|T_1\cap T_2t|$ is equal to $0$ or $(q-1)/2$, depending on whether $2^t=1$ or not.
Now consider $T_1^2\cap T_2t$. Here, the top group of $T_1^2\cap T_2$ is $C_{(q-1)/4}$ 
and it splits each orbit $\mathcal{O}$ and $\mathcal{O}'$ in half. 
Therefore, $|T_1^2\cap T_2t|$ is equal to $0$ or $(q-1)/4$, depending on whether $2^t=1$ or not.
\end{proof}

\begin{proof}[Proof of Theorem \ref{main2}]
Let $A$ be the multiset defined in the following way:
place a multiplicity of 2 on each element of $T_1^2$; place a multiplicity of 1 on each element of $T\backslash T_1$.
We will show that 
\begin{enumerate}[(i)]
\item $|A|=|T|$;
\item $|\chi_A\circ \chi_{T_1^g}|=|T_1|$ for all $g\in G$.
\end{enumerate}

First, $|A|=2|T_1^2|+|T|-|T_1|=|T|$, so (i) is satisfied. Next, suppose $g\in G$.
Then
\begin{align*}
|\chi_A\circ \chi_{T_1^g} |&=|T_1^2\cap T_1^g|-|(T_1\backslash T_1^2)\cap T_1^g|+|T_1^g|\\
&=|T_1^2\cap T_1^g|-|T_1\cap T_1^g|+|T_1^2\cap T_1^g|+|T_1|\\
&=2|T_1^2\cap T_1^g|-|T_1\cap T_1^g|+|T_1|.
\end{align*}
Now by Lemma \ref{nicepropPSL},
$|T_1^2\cap T_1^g|=\tfrac{1}{2}|T_1\cap T_1^g|$, because $T_1^g$ is a right coset (in $T$) of a point-stabiliser in $T$
(in the natural $2$-transitive action of $T$).
Therefore, $|\chi_A\circ \chi_{T_1^g} |=|T_1|$ and so (ii) is satisfied. Therefore, the pair
$(A,T_1)$ witness that $G$ is non-spreading.
\end{proof}

\subsection*{Acknowledgements} This work forms part of an Australian Research Council Discovery Project
DP200101951.

\bibliographystyle{abbrv}
\bibliography{references}

\begin{thebibliography}{10}

\bibitem{survey}
J.~Ara\'{u}jo, P.~J. Cameron, and B.~Steinberg.
\newblock Between primitive and 2-transitive: synchronization and its friends.
\newblock {\em EMS Surv. Math. Sci.}, 4(2):101--184, 2017.

\bibitem{SupplementaryMaterials}
J.~Bamberg, M.~Giudici, J.~Lansdown, and G.~F. Royle.
\newblock Synchronising primitive groups of diagonal type exist - supplementary
  materials.
\newblock Available at \href {http://doi.org/10.5281/zenodo.4722489}
  {\texttt{http://doi.org/10.5281/zenodo.4722489}}.

\bibitem{BannaiIto1984}
E.~Bannai and T.~Ito.
\newblock {\em Algebraic combinatorics. {I}}.
\newblock The Benjamin/Cummings Publishing Co., Inc., Menlo Park, CA, 1984.
\newblock Association schemes.

\bibitem{Bray}
J.~N. Bray, Q.~Cai, P.~J. Cameron, P.~Spiga, and H.~Zhang.
\newblock The {H}all-{P}aige conjecture, and synchronization for affine and
  diagonal groups.
\newblock {\em J. Algebra}, 545:27--42, 2020.

\bibitem{delsarte}
P.~Delsarte.
\newblock An algebraic approach to the association schemes of coding theory.
\newblock {\em Philips Res. Rep. Suppl.}, (10):vi+97, 1973.

\bibitem{mathoverflow}
{Eberhard, Sean and M\"{u}ller, Peter}.
\newblock Regular subsets of $\mathrm{PSL}(2,q)$.
\newblock
  \url{https://mathoverflow.net/questions/338404/regular-subsets-of-textpsl2-q}.

\bibitem{minion}
I.~P. Gent, C.~Jefferson, and I.~Miguel.
\newblock Minion: A fast, scalable, constraint solver, (slides).
\newblock {\em in Proceedings of the 17th European Conference on Artificial
  Intelligence (ECAI)}, 2006.

\bibitem{gm}
C.~Godsil and K.~Meagher.
\newblock {\em Erd\H os-{K}o-{R}ado theorems: algebraic approaches}, volume 149
  of {\em Cambridge Studies in Advanced Mathematics}.
\newblock Cambridge University Press, Cambridge, 2016.

\bibitem{gurobi}
{Gurobi Optimization Inc.}
\newblock Gurobi optimizer version 9.1.
\newblock \url{http://www.gurobi.com/}.

\bibitem{ito}
N.~It\^{o}.
\newblock On the factorizations of the linear fractional group {$LF(2,p^n)$}.
\newblock {\em Acta Sci. Math. (Szeged)}, 15:79--84, 1953.

\bibitem{Neumann}
P.~M. Neumann.
\newblock Primitive permutation groups and their section-regular partitions.
\newblock {\em Michigan Math. J.}, 58(1):309--322, 2009.

\bibitem{roos}
C.~Roos.
\newblock On antidesigns and designs in an association scheme.
\newblock {\em Delft Progr. Rep.}, 2(2):98--109, 1982.

\bibitem{grape}
L.~H. Soicher.
\newblock The {GRAPE} package for {GAP}, version 4.8.5.
\newblock \url{https://gap-packages.github.io/grape}, 2021.

\bibitem{mathematica}
{Wolfram Research Inc.}
\newblock Mathematica, {V}ersion 12.0.
\newblock Champaign, IL, 2021.

\end{thebibliography}

\newpage
\appendix

\section{Cases for $q=17$.} \label{append:Cases17}

\begin{table}[!ht]
\begin{center}
\begin{tabular}{lccl}
\toprule
 $I$  & $\alpha$ to be satisfied & $\omega$ to be satisfied & Comment \\
\midrule
 \cc{2}, \cc{4}, \cc{8}, \cc{9}, \cc{17}  & 9 & 272 & \textsc{Grape} ($\alpha= 3$) \\
 \cc{2}, \cc{3}, \cc{4}, \cc{8}, \cc{9} & 17* & 144 & constraint sat. solver\\
 \cc{2}, \cc{3}, \cc{4}, \cc{8}, \cc{17} & 18 & 136 & \textsc{Grape} ($\alpha= 10$) \\
 \cc{2}, \cc{8}, \cc{9}, \cc{17} & 18 & 136 & \textsc{Grape} ($\alpha=6$) \\
 \cc{2}, \cc{3}, \cc{8}, \cc{9} & 34* & 72 & constraint sat. solver \\
 \cc{2}, \cc{3}, \cc{8}, \cc{17} & 36 & 68 & \textsc{Grape} ($\alpha=18$) \\
 \cc{2}, \cc{4}, \cc{9}, \cc{17}  & 18 & 136 & \textsc{Grape} ($\alpha=10$) \\
 \cc{4}, \cc{8}, \cc{9}, \cc{17} & 18 & 136 & \textsc{Grape} ($\alpha=12$) \\
 \cc{2}, \cc{4}, \cc{8}, \cc{17} & 18 & 136 & \textsc{Grape} ($\alpha=11$) \\
 \cc{2}, \cc{3}, \cc{4}, \cc{9} & 34* & 72 & constraint sat. solver \\
 \cc{2}, \cc{3}, \cc{4}, \cc{17} & 36 & 68* & constraint sat. solver \\
 \cc{3}, \cc{4}, \cc{8}, \cc{9} & 34* & 72 & constraint sat. solver \\
 \cc{2}, \cc{3}, \cc{4}, \cc{8} & 34 & 72 & \textsc{Grape} ($\alpha=17$) \\
 \cc{3}, \cc{4}, \cc{8}, \cc{17} & 36 & 68 & \textsc{Grape} ($\alpha=12$) \\
 \cc{2}, \cc{9}, \cc{17} & 36 & 68 & \textsc{Grape} ($\alpha=16$) \\
 \cc{8}, \cc{9}, \cc{17} & 36 & 68 & \textsc{Grape} ($\alpha=24$) \\
 \cc{2}, \cc{8}, \cc{17} & 36 & 68* & constraint sat. solver \\
 \cc{2}, \cc{3}, \cc{9} & 68* & 36 & \textsc{Grape} ($\omega= 18$) \\
 \cc{2}, \cc{3}, \cc{17} & 72 & 34* & constraint sat. solver\\
 \cc{3}, \cc{8}, \cc{9} & 68* & 36 & constraint sat. solver\\
 \cc{2}, \cc{3}, \cc{8} & 68 & 36 & \textsc{Grape} ($\omega=13$) \\
 \cc{3}, \cc{8}, \cc{17} & 72 & 34* & constraint sat. solver \\
 \cc{3}, \cc{4}, \cc{17} & 72 & 34* & constraint sat. solver \\
\bottomrule
\end{tabular}
\end{center}
\caption{Putative graphs to be considered for $q=17$.}\label{tbl:graphs17}
\end{table}

\end{document}